\newtheorem{thm}{Theorem}[section]
\newtheorem{cor}[thm]{Corollary}
\newtheorem{lem}[thm]{Lemma}
\newtheorem{prop}[thm]{Proposition}
\newtheorem*{prop*}{Proposition}
\newtheorem*{fact*}{Fact}
\newtheorem*{claim*}{Claim}
\theoremstyle{definition}
\newtheorem{defi}[thm]{Definition}
\newtheorem*{rem*}{Remark}
\title[Fra\"iss\'e
  limit of finite Heyting algebras]{The automorphism group of the Fra\"iss\'e
  limit of finite Heyting algebras}
\author{Kentar\^o Yamamoto}
\address{The Czech Academy of Sciences\\
Pod Vod\'arenskou v\v{e}\v{z}\'i  271/2\\
Libe\v{n}\\
182 00 Praha\\ The Czech Republic}
\date{}
\email{yamamoto@cs.cas.cz}
\newcommand{\FR}{Fra\"iss\'e}
\newcommand{\aru}[1]{\mathop{\exists #1}}
\newcommand{\zenbu}[1]{\mathop{\forall #1}}
\newcommand{\0}{\mathclap{\phantom{\emptyset}}0}
\newcommand{\power}{\mathscr{P}}
\newcommand{\io}[1][]{{}^{\circ_{#1}}}
\newcommand{\Aut}{\mathrm{Aut}}
\newcommand{\il}{\iota_{\hookleftarrow}}
\newcommand{\ir}{\iota_{\hookrightarrow}}
\newcommand{\comext}{\bigsqcup}
\newcommand{\Th}{\mathrm{Th}}
\newcommand{\inv}{{-1}}
\newcommand{\indep}[1][]{%
  \mathrel{
    \mathop{
      \vcenter{
        \hbox{\oalign{\noalign{\kern-.3ex}\hfil$\vert$\hfil\cr
            \noalign{\kern-.7ex}
            $\smile$\cr\noalign{\kern-.3ex}}}
      }
    }\displaylimits_{#1}
  }
}
\DeclareMathOperator{\Diag}{Diag}
\DeclareMathOperator{\ran}{ran}
\DeclareMathOperator{\supp}{supp}
\begin{document}
\begin{abstract}
  Roelcke non-precompactness,
  simplicity, and
  non-amenability of the automorphism group of the
  \FR\ limit of finite Heyting algebras are proved among others.
\end{abstract}
\maketitle

In the present article, we examine the \FR{} limit $L$
of (nontrivial) finite Heyting algebras.
The existence of the model-completion $T^*$ of the theory $T$ of
Heyting algebra
stems from the uniform interpolation theorem for propositional intuitionistic logic,
in which the interpolant of two sentences depends on only one of the two sentences \cite{GHILARDI199727,PITTS_1992}.
The \FR{} limit $L$ is the prime model of $T^*$
and was used to derive an axiomatization of $T^*$ by Darni\`ere~\cite{2018arXiv181001704D}.
The results in the present article complement existing literature
on the automorphism groups of ultrahomogeneous
lattices, e.g., the countable atomless Boolean algebra~\cite{anderson58:_algeb_simpl_certain_group_homeom,TRUSS1989494,Kechris2012}
and the universal distributive lattice~\cite{droste00:_autom_group_univer_distr_lattic},
as our ultrahomogeneous structure is not $\omega$-categorical.

The article is organized as follows:
In the first section,
we recall relevant definitions and fix notation.
In the second section,
we compare the automorphism of $L$ with those of better-known ultrahomogeneous
structures,
especially that of the countable atomless Boolean algebra $B$.
It will be proved that $\Aut(L)$ is not Roelcke precompact
and thus is not realized as the automorphism group of any $\omega$-categorical
structure.
Having established that, we will construct continuous embeddings
of $\Aut(L)$ into $\Aut(B)$.
In the last section,
we will see that $\Aut(L)$ is not amenable
and that $\Aut(L)$ is simple.
The argument used to prove the last claim is
applicable to other \FR{} classes of lattices
with the superamalgamation property,
which is of an independent interest
as it characterizes the vailidity of the Craig interpolation theorem
for a nonclassical logic \cite{10.2307/20016013,Maksimova1977}.

It is an important future task to investigate the combinatorics of
the age $\mathrm{Age}(L)$ of $L$,
in particular about the existence of order expansion of $\mathrm{Age}(L)$
with the Ramsey property and the ordering property,
and the metrizability of $\Aut(L)$.

\section{Preliminaries}
We review an important construction of Heyting algebras
(this material appears in, e.g., Chagrov and Zakharyaschev~\cite{chagrov97:_modal_logic}).
For an arbitrary poset $\mathbb P$,
the poset of upward closed sets, or \emph{up-sets}, of $\mathbb P$
ordered by inclusion has a Heyting algebra structure.
We call this Heyting algebra is the \emph{dual} of $\mathbb P$.
Conversely, if $H$ is a \emph{finite} Heyting algebra,
then one can associate with $H$ the poset $\mathbb P$ of join-prime elements of $H$
with the reversed order.
One can show that the dual of $\mathbb P$ is isomorphic to $H$.

Suppose that $H$ and $H'$ are the duals of $\mathbb P$ and $\mathbb P'$, respectively,
and that $f : \mathbb P \to \mathbb P'$ is \emph{p-morphic}, i.e.,
$f$ is monotonic with
\[
  \zenbu{u \in \mathbb P} \zenbu{v \ge f(u)} \aru{w \ge u} f (w) = u,
\]
then the function $f^*$ defined on $H'$ that maps each up-set with
its inverse image under $f$ is a Heyting algebra homomorphism $H' \to H$.
We call $f^*$ the \emph{dual} of $f$ as well.
If $f$ is injective, then $f^*$ is surjective;
if $f$ is surjective, then $f^*$ is a Heyting algebra embedding.

\hyphenation{super-amalgamation}
Henceforth, $L$ is the \FR\ limit of all finite nontrivial Heyting algebras,
which exists \cite{ghilardi02:_sheav_games_model_compl}.
This structure is \emph{ultrahomogeneous}
in the sense that every isomorphism between
finitely generated substructures,
or members of the \emph{age} $\mathrm{Age}(L)$ of $L$,
extends to an automorphism on $L$.
(Throughout the paper, Heyting algebras are structures in the language
$\{0, 1, \wedge, \vee, \to\}$ unless otherwise stated.)
The strong amalgamation property of the theory $T$ of Heyting algebras was proved by Maksimova~\cite{Maksimova1977};
in fact, her construction establishes the \emph{superamalgamation property}
for the class of finite Heyting algebras.
Recall that a \FR{} class $\mathcal K$
of poset expansions has the superamalgamation property
if for every diagram $A_1 \hookleftarrow A_0 \hookrightarrow A_2$
of inclusion maps in $\mathcal K$,
the amalgamation property of $\mathcal K$ is witnessed by
a diagram $A_1 \hookrightarrow A \hookleftarrow A_2$ of inclusion maps
in such a way that $A_1 \indep_{A_0} A_2$,
where $\indep$ is the ternary relations for subsets of $A$ defined as:
\begin{equation}\label{eq:anchor}
  S \indep[U]T
  \iff \zenbu{a \in S}\zenbu{b \in T}
  \begin{Bmatrix}%
    a \le b &\implies& \aru{c \in U} a \le c \le b\\
    b \le a &\implies& \aru{c \in U} b \le c \le a
  \end{Bmatrix}.
\end{equation}
The superamalgamation property
for the class $\mathcal K$ of finite Heyting algebras
follows from the superamalagamation property for $T$
\cite{Maksimova1977}.
Indeed, let $A_0, A_1, A_2 \in \mathcal{K}$ with $A_0 \subseteq A_1$
($i = 1, 2$).
Consider the quantifier-free sentence $\phi$ with parameters
from $A_1 \cup A_2$ that is the conjunctions of $\bigwedge\mathrm{Diag}(A_1)$,
$\bigwedge\mathrm{Diag}(A_2)$,
and the quantifier-free sentence expressing $A_1 \indep[A_0] A_2$,
where $\Diag(\cdot)$ denotes the diagrams of structures.
By the superamalgamation property for $T$,
we have a model of $T \cup \{\phi\}$.
By the stronger form of the finite model property for Heyting algebras
that is applicable to all quantifier-free formulas~%
\cite{darniere10:_codim_pseud_co_heytin_algeb},
$\phi$ has a model in $\mathcal K$.

We introduce notation naming structures obtained
by the superamalgamation property:
Let $D$ be the diagram \mbox{$B \hookleftarrow A \hookrightarrow C$}
in $\mathrm{Age}(L)$,
where $\mathrm{Age}(L)$ the age of $L$ is
regarded as a category whose morphisms are the embeddings.
The superamalgamation property for $\mathrm{Age}(L)$
gives rise to a subalgebra $\comext D$ of $L$
such that there are embeddings $\il^D : B \hookrightarrow \comext D$
and $\ir^D : C \hookrightarrow \comext D$
with $\il^D(B) \indep_{\ir^D(A)}\ir^D(C)$.
One can show
that $\il^D(B) \setminus \il^D(A)$ and $\ir^D(C) \setminus \ir^D(A)$ are disjoint.

\section{Comparison with known automorphism groups}
In this section, we study the automorphism group of $L$
in relation to those of better-known ultrahomogeneous structures.
First of all,
we find it interesting to see that $\Aut(L)$ is distinct from
the automorphism groups of better-known ultrahomogeneous structures.
In particular,
we will later construct  embeddings of $\Aut(L)$ into the automorphism group
of the countable atomless Boolean algebra,
but the first result of this section implies that they cannot be
topological group isomorphisms.
\newcommand{\qftp}{\mathrm{qftp}}
\newcommand{\tp}{\mathrm{tp}}
Recall that a non-archimedian topological group $G$, such as $\Aut(L)$,
is \emph{Roelcke precompact}
if the set of double cosets $\{VxV \mid x \in G\}$
is finite for every open subgroup $V \le G$
(see, e.g., Tsankov~\cite[p.~534]{tsankov12:_unitar_repres_oligom_group}).

\begin{thm}
  $\Aut(L)$ is not Roelcke precompact.
  \emph{A fortiori},
  $\Aut(L)$ cannot be realized as the automorphism group
  of any countable $\omega$-categorical structure.
\end{thm}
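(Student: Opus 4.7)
The plan is to exhibit an open subgroup $V \le \Aut(L)$ whose $(V,V)$-double coset space is infinite. I would take $V = \Aut(L/A)$ for a well-chosen finite subalgebra $A \subseteq L$.

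By ultrahomogeneity of $L$, the map $g \mapsto g|_A$ sends the double coset space $V \backslash \Aut(L) / V$ bijectively onto the set of $V$-orbits on embeddings $A \hookrightarrow L$, and these are parametrised by isomorphism types, relative to $A$, of configurations $(M, A, A')$ in which $M \in \mathrm{Age}(L)$ is generated by $A$ together with another embedded copy $A' \cong A$. Dualising via the correspondence between finite Heyting algebras and finite posets (with $p$-morphisms playing the role of HA-homomorphisms reversed), such configurations correspond to finite posets $\mathbb P$ equipped with two surjective $p$-morphisms $f, g \colon \mathbb P \to \mathbb P_A$, taken up to isomorphisms of $\mathbb P$ preserving $f$. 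The task is thus to find, for some choice of $\mathbb P_A$, an infinite family of such triples pairwise non-isomorphic in this sense.

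The strategy is to take $\mathbb P_A$ as simple as possible — a natural first attempt is the two-element chain, so that $A$ is the three-element chain — and to use the superamalgamation property recalled in the preliminaries to construct iteratively a sequence $\{(\mathbb P_n, f_n, g_n)\}_{n \ge 2}$ of such configurations, each obtained by superamalgamating the previous one over $A$ with a fresh copy of $A$ along a prescribed diagram. The superamalgamation property ensures that each $(\mathbb P_n, f_n, g_n)$ is realised in $\mathrm{Age}(L)$, hence inside $L$. We would attempt to distinguish them by a combinatorial invariant of $\mathbb P_n$ — most naturally, its cardinality, or the length of a longest chain in $\mathbb P_n$ whose images under both $f_n$ and $g_n$ traverse $\mathbb P_A$ — that grows with $n$. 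The existence of such a family reflects the fact that $\mathrm{Age}(L)$ is not uniformly locally finite, as already witnessed by the free $1$-generated Heyting algebra (the Rieger--Nishimura lattice) being infinite.

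The main obstacle will be identifying the right invariant and carrying out the bookkeeping so that the proposed family genuinely represents distinct double cosets; plausible choices of invariant may require refinement if smaller configurations collapse under $p$-compatible identifications and end up realising the same subalgebra of $L$. The \emph{a fortiori} consequence is immediate, since the automorphism group of any countable $\omega$-categorical structure is Roelcke precompact.
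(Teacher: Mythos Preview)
Your reduction is exactly the paper's: by ultrahomogeneity, quantifier elimination, and local finiteness, Roelcke non-precompactness amounts to exhibiting a single $1$-type $p$ realized in $L$ such that infinitely many $2$-types extend $p(x)\cup p(y)$; equivalently, a finitely generated subalgebra $A$ such that two embedded copies of $A$ can generate subalgebras of infinitely many isomorphism types. The paper, like you, takes $A$ to be the $3$-chain (i.e.\ $p$ the type of an element generating a $3$-chain), and handles the \emph{a fortiori} clause the same way.

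Where you diverge is in producing the infinite family, and here your plan has a real gap. The iterated-superamalgamation scheme is underspecified, and the obstacle you flag is genuine: after superamalgamating $M_{n-1}$ with a fresh copy of $A$ (over $\mathbf 2$, presumably---over $A$ itself nothing new happens), there is no reason the two \emph{distinguished} copies of $A$ generate the resulting algebra, so size or chain-length invariants of $M_n$ need not separate the corresponding double cosets. The paper bypasses this entirely using the very idea you mention only as background. Let $F_1$ be the Rieger--Nishimura lattice with free generator $x$; for each term $t$ let $L_t = F_1/\theta_t$ be the quotient by the principal filter at $t$, and let $L_t^*$ be $L_t$ with a new bottom $0^*$ adjoined. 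One checks directly that $L_t^*$ is generated by the old bottom $0^{L_t}$ together with $[x]_t$ (replace each occurrence of $0$ in a term by a fresh variable evaluated at $0^{L_t}$), and that each of these two generators alone generates only a $3$-chain in $L_t^*$ (because $a\to 0^* = 0^*$ for every $a\neq 0^*$). Since the $L_t$ realize infinitely many isomorphism types, so do the $L_t^*$, and the argument is complete with no amalgamation bookkeeping. I would drop the superamalgamation plan in favour of this direct construction.
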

\begin{proof}
  Tsankov~\cite{tsankov12:_unitar_repres_oligom_group}
  showed that a topological group is Roelcke precompact
  if and only if it is the inverse limit of some inverse system
  of oligomorphic permutation groups.
  The second part of the claim follows from the first part and this result.

  Since $L$ is ultrahomogeneous,
  $\Aut(L)$ is not Roelcke precompact
  if and only if there are sequences $(a_i)_{i<\omega}, (b_i)_{i<\omega}$
  of elements of $L$
  such that
  $\tp^L(a_i / \0) = \tp^L(b_i / \0) = \tp^L(a_j / \0) = \tp^L(b_j / \0)$
  for $i, j < \omega$,
  and that $\{\tp^L(a_ib_i / \0)\}$ is infinite.
  Furthermore, since $\Th(L)$ eliminates quantifiers,
  types realized in $L$  are in one-to-one correspondence
  with quantifier-free types realized in $L$.
  Finally, as $L$ is locally finite,
  the latter
  are essentially isomorphism types of subalgebras of $L$
  with distinguished generators.

  With that in mind,
  let $F_1$ be the free Heyting algebra whose generator is $x$.
  For each term $t(x) \in F_1$,
  write $L_t$ for the quotient of $F_1$ by the principal filter $\theta_t$
  generated by $t$.
  Furthermore, let $L_t^*$ be the Heyting algebra obtained
  by adding a new minimum element $0^{L_t^*}$ below $0^{L_t}$.
  For a term $t(x)$,
  we define $t^*(x, y)$ to be the term obtained
  by replacing every occurrence of $0$ with $y$.
  One can check that $(t^*)^{L_t^*}([x]_t, 0^{L_t}) = [t(x)]_t \in L_t^*$,
  where $[\cdot]_t$ denotes the congruence class with respect to $\theta_t$.
  Therefore, $L_t^*$ is generated by $0^{L_t}$ and $[x]_t$.
  We have obtained 2-generated subalgebras of $L$ of infinitely many
  isomorphism types.
  On the other hand, we have
  $\langle [x]_t \rangle^{L_t^*} = \langle 0^{L_t} \rangle^{L_t^*}$
  is a 3-chain for all $t$.
\end{proof}

It is well known that $\Aut(M)$ for a countable $\omega$-categorical $M$
is not locally compact \cite{macpherson11:_survey_homog_struc}.
\begin{prop}
  The topological group $\Aut(L)$ is not locally compact.
\end{prop}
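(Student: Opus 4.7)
The plan is to exhibit, for every finite $A \subseteq L$, an element $b \in L$ whose $\Aut(L/A)$-orbit is infinite. This suffices: the pointwise stabilizers $\Aut(L/A)$ for finite $A$ form a neighborhood basis of $\mathrm{id}$ in $\Aut(L)$, and a closed subgroup of the permutation group of a countable set (in the topology of pointwise convergence) is compact precisely when all of its orbits are finite, so any compact neighborhood of $\mathrm{id}$ would contain and hence imply the compactness of some such stabilizer.

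Given a finite $A$, I would first replace it by $\langle A\rangle^L$ (still finite by local finiteness of $L$, with the same pointwise stabilizer), then pick any $b \in L \setminus A$ and let $B = \langle A \cup \{b\}\rangle^L \in \mathrm{Age}(L)$. The heart of the argument is to build, for each $n \ge 1$, a subalgebra $D_n \subseteq L$ together with $n$ embeddings $\iota_1,\ldots,\iota_n : B \hookrightarrow D_n$ that restrict to the inclusion on $A$ and whose images pairwise intersect exactly in $A$. This is done by iterating the $\comext$ construction from the preliminaries on the diagram $B \hookleftarrow A \hookrightarrow B$: at each inductive step, the two-copy disjointness statement recorded there (that $\il^D(B)\setminus\il^D(A)$ and $\ir^D(C)\setminus\ir^D(A)$ are disjoint) ensures that the newly adjoined copy of $B$ meets the previously assembled structure only along $A$.

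Once $D_n$ is in hand, the elements $\iota_1(b),\ldots,\iota_n(b) \in L$ are pairwise distinct and all realize the same quantifier-free type over $A$ as $b$; by ultrahomogeneity of $L$ and quantifier elimination for $\Th(L)$, they lie in a single $\Aut(L/A)$-orbit. Letting $n$ grow without bound yields an infinite orbit, so $\Aut(L/A)$ is not compact, and hence $\Aut(L)$ is not locally compact.

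I do not foresee a serious obstacle beyond the inductive bookkeeping in constructing $D_n$, which reduces at each step to the two-copy case already provided in the preliminaries; no property of Heyting algebras beyond superamalgamation and the local finiteness of $L$ enters the argument.
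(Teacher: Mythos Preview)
Your argument is correct. The reduction to showing that every pointwise stabilizer $\Aut(L/A)$ has an infinite orbit is the same as in the paper, but the mechanism for producing infinitely many conjugates over $A$ is different. The paper works concretely with Heyting duality: starting from the dual poset $\mathbb P$ of $\langle S\rangle$, it adjoins an isolated point $w$ to obtain $\mathbb P' = \mathbb P \sqcup \{w\}$, takes $a$ to be the image of $\{w\}$ under an embedding of the dual of $\mathbb P'$ into $L$ over $\langle S\rangle$, and iterates to build a sequence $(a_i)_i$ of such elements, any two of which are conjugate over $S$. Your route is softer: pick any $b \notin A$, and use strong amalgamation (which is all you actually invoke from the $\comext$ construction) to assemble $D_n$ containing $n$ disjoint-over-$A$ copies of $B = \langle Ab\rangle$. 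Ultrahomogeneity then places all the resulting images of $b$ in a single orbit.

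Two small remarks. First, the appeal to quantifier elimination is harmless but unnecessary: ultrahomogeneity already gives that elements with the same quantifier-free type over a finitely generated substructure are conjugate over it. Second, your argument uses only strong amalgamation and local finiteness, not the full superamalgamation property; in particular it would go through verbatim for any locally finite \FR\ limit whose age has strong amalgamation. That generality is a genuine advantage over the paper's proof, which leans on the poset duality specific to finite Heyting algebras.
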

\begin{proof}
  It suffices to show that for every finite subset $S \subseteq L$
  there is an infinite orbit in the action of $\Aut(L)_{(S)}$ on $ L$.
  Note that for every finite subalgebra $A \subseteq L$,
  there exists $a \in L \setminus A$
  such that $a$ is join-prime in $\langle Aa \rangle^L$.
  Indeed, consider the dual $\Bbb P$ of $A$ and
  the disjoint union $\Bbb P' := \Bbb P \sqcup \{w\}$,
  where $w$ is a fresh element,
  and let $a$ be the image of $\{w\}$ under the embedding of
  the dual of $\Bbb P'$ into $L$ that fixes $A$ pointwise.
  By repeatedly using this,
  take an $\omega$-sequence $(a_i)_{i<\omega}$ of elements of $L$
  such that $a_i \in L \setminus \langle Sa_0a_1\dots a_{i-1}\rangle^L$
  is join-prime in $\langle Sa_0a_1\dots a_{i}\rangle^L$
  for $i < \omega$.
  By construction, there exists an automorphism $\phi_i : L \to L$
  fixing $S$ pointwise
  such that $\phi_i(a_i) = a_{i+1}$ for $i<\omega$.
  Hence, the orbit of $a_0$ under $\Aut(L)_{(S)}$ is infinite.
\end{proof}

An obvious strategy to study $\Aut(L)$ is to relate it to
$\Aut(B)$, where $B$ is the countable atomless Boolean algebra.
The following lemma gives rise to a topological embedding of the former
into the latter.
Recall that an \emph{interior operator} on a Boolean algebra $B$
is a function from $B$ to $B$
that is decreasing, monotonic, idempotent, and commuting over meets.
We write interior operators in superscripts so that $B^\circ$
is the image of an interior operator ${}^\circ : B \to B$.
For every interior operator ${}^\circ : B \to B$,
the image $B^\circ$ with the induced order
is isomorphic to some Heyting algebra
(see, e.g., \cite{blok76:_variet_inter_algeb}).
\begin{lem}\mbox{}\label{lem:automorphism-group}
  \begin{enumerate}
  \item
    Let $f:H \to H_1$ be a Heyting algebra homomorphism between finite algebras.
    There are finite Boolean algebras $B(H)$ and $B(H_1)$,
    interior operators $\io, \io[1]$ on $B(H), B(H_1)$, respectively,
    and a unique Boolean algebra homomorphism $B(f) : B(H) \to B(H_1)$
    such that $B(H)\io{} \cong H$, $B(H_1)\io[1] \cong H_1$
    and that $B(f)$ extends $f$.
    If $f$ is injective, so is $B(f)$;
    if $f$ is surjective, so is $B(f)$.
  \item
    There is an interior operator $\io$
    on the countable atomless Boolean algebra $B$ such that
    $B\io$ is isomorphic to
    the universal ultrahomogeneous countable Heyting algebra $L$.
  \end{enumerate}
\end{lem}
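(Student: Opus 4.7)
My plan for part (1) is to invoke the duality between finite Heyting algebras and finite posets recalled in Section~1. For a finite Heyting algebra $H$ with dual poset $\mathbb P$, I set $B(H) := \power(\mathbb P)$, the full power-set Boolean algebra of $\mathbb P$, and define the interior operator by $S^\circ := \{p \in S : {\uparrow} p \subseteq S\}$, i.e., the largest up-set of $\mathbb P$ contained in $S$; its image $B(H)^\circ$ is precisely the collection of up-sets of $\mathbb P$, hence isomorphic to $H$. For a Heyting homomorphism $f : H \to H_1$, taking the dual p-morphism $g : \mathbb P_1 \to \mathbb P$ and setting $B(f) := g^{-1}$ gives a Boolean homomorphism (preimage maps always are) that restricts to $f$ on up-sets by the definition of the dual. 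Uniqueness holds because each atom $\{p\}$ of $B(H)$ equals ${\uparrow} p \wedge \bigwedge_{q > p} \neg {\uparrow} q$, a Boolean combination of elements of $H$ (the principal up-sets being join-primes), so $H$ Boolean-generates $B(H)$ and any Boolean homomorphism extending $f$ is forced. Preservation of injectivity and surjectivity then follows by combining the duality ($f$ injective iff $g$ surjective, and dually) with the elementary fact that a preimage map between power sets is injective iff the underlying function is surjective, and surjective iff the underlying function is injective.

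For part (2), I realize $L$ as the direct limit of its finite subalgebras $H_i$ along the inclusions $f_{ij} : H_i \hookrightarrow H_j$, apply the functor $B(-)$ to obtain a directed system of finite Boolean algebras with embeddings $B(f_{ij})$, and denote the colimit by $B^*$, a countable Boolean algebra. The key compatibility to verify is that each $B(f_{ij})$ commutes with the interior operators, i.e., $B(f_{ij})(S^\circ) = B(f_{ij})(S)^\circ$; this reduces to showing that for a surjective p-morphism $g : \mathbb P_j \to \mathbb P_i$, the preimage $g^{-1}$ sends the largest up-set in $S$ to the largest up-set in $g^{-1}(S)$, which is a short calculation from the p-morphism condition. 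With this in hand, the level-wise interior operators assemble into an interior operator on $B^*$ whose image is the direct limit of the $H_i$, namely $L$.

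It then remains to identify $B^*$ with $B$. By the standard characterization of the countable atomless Boolean algebra, only atomlessness requires argument, and this is where I expect the main obstacle. Given any atom of $B^*$, represented by a singleton $\{p\} \in B(H_i)$ for some $i$, I would split $p$: let $\mathbb P'$ be obtained from $\mathbb P_i$ by replacing $p$ with two incomparable copies $p_1, p_2$ that inherit the same comparabilities to every other element, and let $H'$ be the dual of $\mathbb P'$. The evident projection $\mathbb P' \to \mathbb P_i$ sending $p_1, p_2 \mapsto p$ and fixing the rest is a surjective p-morphism (case-check on the p-morphism clause), which dualizes to an embedding $H_i \hookrightarrow H'$. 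By the extension property of the \FR\ limit, this embedding extends to an embedding of $H'$ into $L$ whose image is some $H_j \supseteq H_i$ in the direct system; under $B(f_{ij})$ the atom $\{p\}$ becomes $\{p_1, p_2\} = \{p_1\} + \{p_2\}$, witnessing that it fails to remain an atom of $B^*$.
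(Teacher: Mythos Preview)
Your proof is correct and follows essentially the same route as the paper: for part~(1) both set $B(H) = \power(\mathbb P)$ with the ``largest up-set'' interior operator and take $B(f)$ to be the preimage map along the dual p-morphism; for part~(2) both pass to the colimit of the $B(H_i)$ and prove atomlessness by splitting a single point via a surjective p-morphism. You are in fact more explicit than the paper on two points it leaves implicit---the uniqueness of $B(f)$ (your observation that each atom $\{p\}$ is the Boolean combination ${\uparrow}p \wedge \bigwedge_{q>p}\neg{\uparrow}q$ of principal up-sets, so $H$ Boolean-generates $B(H)$) and the commutation $B(f_{ij})(S^{\circ}) = B(f_{ij})(S)^{\circ}$ needed to glue the interior operators.

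The one genuine variation is in the atomlessness step. You first reduce to a singleton $\{p\}\in B(H_i)$ and then replace $p$ by two \emph{incomparable} copies $p_1,p_2$ with the same external comparabilities; the paper instead takes an arbitrary nonzero $a\in B(H_i)$, picks $w\in a$, and replaces $w$ by a $2$-\emph{chain} $\{w_1<w_2\}$, exhibiting $b=(a\setminus\{w\})\cup\{w_1\}$ with $0<b<a$ directly. Both collapsing maps are easily checked to be surjective p-morphisms, so either construction works; your preliminary reduction to atoms of the finite stage is a mild simplification, while the paper's version avoids that reduction at the cost of tracking a general $a$.
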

\begin{proof}\nopagebreak\mbox{}\nopagebreak
  \begin{enumerate}\nopagebreak
  \item\nopagebreak
    Let $P$ and $P_1$ be the dual posets of $H$ and $H_1$, respectively.
    There is a p-morphism $D(f) : P_1 \to P$ that is  the dual of $f$.
    $D(f)$ is surjective if $f$ is injective.
    Let $B(H) = \power (P)$ and $B(H_1) = \power(P_1)$.
    $D(f)$ induces a Boolean algebra homomorphism $B(f) : B(H) \to B(H_1)$.
    $B(f)$ is injective if $D(f)$ is surjective.
    Likewise, $B(f)$ is surjective if $f$ is.
    Let $\io{}, \io[1]$ be the operations that take a subset
    to the maximal up-set
    contained by that set.
  \item
    Let $(L_i)_{i<\omega}$ be a chain of finite Heyting algebras
    used in the construction of $L$; so $\bigcup_i L_i = L$.
    Let $B_i = B(L_i)$ as above and $\io[i]$ be an interior operator
    such that $B_i\io[i] \cong L_i$.
    We may take $B_i \subseteq B_{i+1}$ for $i < \omega$.
    Then $\io[i+1]$ extends $\io[i]$.
    Let $B = \bigcup_i B_i$ and $\io = \bigcup \io[i]$.
    Then $B\io = \left( \bigcup_i B_i \right )\io =
    \bigcup_i B_i\io[i] = \bigcup_i L_i = L$.
    It remains to show that $B$ is atomless.
    Take an arbitrary $a \in B$ that is nonzero.
    Take $i < \omega$ such that $a \in B_i$.
    Let $P_i$ be the poset dual to $L_i$;
    then $a$ is a nonempty subset of $P_i$.
    Take some $w \in a$.
    Let $P'$ be the poset obtained from $P_i$
    by replacing $w$ with the 2-chain $\{w_1 < w_2\}$.
    Let $\pi: P' \twoheadrightarrow P_i$ be the surjection
    that maps the chain to $\{w\}$ and is the identity elsewhere.
    This is a p-morphism,
    and it induces $\iota: L_i \hookrightarrow L'$,
    where $L'$ is the dual of $P'$.
    Take $k < \omega$ such that
    there is an embedding $\iota': L' \hookrightarrow L_k$
    such that $\iota' \circ \iota$ is the identity on $L_i$.
    Let $b = (a \setminus \{w \}) \cup \{w_1\}$.
    Then $b \in B_k = B(L_k) \subseteq B$ and $0 < b < a$.
    \hfil\qed
  \end{enumerate}
  \noqed
\end{proof} 

\begin{thm}
    An automorphism $L \to L$ can be extended (as a function between pure sets)
    to an automorphism $B \to B$.
    This extention is unique.
    Moreover, this defines an injective group homomorphism
    $\mathrm{Aut}(L) \hookrightarrow \mathrm{Aut}(B)$
    that is a homeomorphism onto its image.
\end{thm}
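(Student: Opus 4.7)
The plan is to exploit the fact that $L$ generates $B$ as a Boolean algebra: this makes uniqueness automatic and reduces the topological assertion to a routine check. In the notation of the proof of Lemma~\ref{lem:automorphism-group}, write $L = \bigcup_i L_i$ and $B = \bigcup_i B_i$ with $B_i = \power(P_i)$ and $L_i \cong B_i\io[i]$. In each finite $B_i$, the singleton $\{w\}$ equals $\uparrow\! w \setminus \bigcup_{v > w}\uparrow\! v$, which lies in the Boolean subalgebra generated by the up-sets of $P_i$, i.e.\ by $L_i$; since singletons generate $B_i$, so does $L_i$. Taking unions, $L$ generates $B$, and consequently any two Boolean algebra endomorphisms of $B$ agreeing on $L$ coincide. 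This yields the uniqueness part of the statement for free.

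For existence, fix $\phi \in \Aut(L)$. For every $i$, the image $\phi(L_i)$ is finite, hence $\phi(L_i) \subseteq L_{k(i)}$ for some $k(i)$. Apply Lemma~\ref{lem:automorphism-group}(1) to the Heyting algebra embedding $\phi|_{L_i}\colon L_i \hookrightarrow L_{k(i)}$ to obtain a Boolean algebra embedding $\tilde\phi_i\colon B_i \hookrightarrow B_{k(i)} \subseteq B$ extending $\phi|_{L_i}$. For $i \le j$, the restriction $\tilde\phi_j|_{B_i}$ is another Boolean extension of $\phi|_{L_i}$, so by uniqueness it equals $\tilde\phi_i$; hence the $\tilde\phi_i$ glue to a Boolean endomorphism $\tilde\phi\colon B \to B$ extending $\phi$. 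Running the same construction on $\phi^{-1}$ and invoking uniqueness once more yields a two-sided inverse, so $\tilde\phi \in \Aut(B)$; identically, $\widetilde{\phi\psi} = \tilde\phi\tilde\psi$ follows from uniqueness, so $\phi \mapsto \tilde\phi$ is a group homomorphism, and injectivity is obvious from $\tilde\phi|_L = \phi$.

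For the topological part, both groups carry the topology of pointwise convergence. A basic neighborhood of $\tilde\phi$ in $\Aut(B)$ has the form $\{\psi : \psi|_F = \tilde\phi|_F\}$ for some finite $F \subseteq B$; choosing $i$ with $F \subseteq B_i$, every $\phi' \in \Aut(L)$ with $\phi'|_{L_i} = \phi|_{L_i}$ satisfies $\tilde{\phi'}|_{B_i} = \tilde\phi|_{B_i}$ by uniqueness, hence $\tilde{\phi'}|_F = \tilde\phi|_F$. This gives continuity of $\phi \mapsto \tilde\phi$. The inverse $\tilde\phi \mapsto \tilde\phi|_L$ is continuous because pointwise convergence on $B$ restricts to pointwise convergence on $L \subseteq B$. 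Therefore the map is a homeomorphism onto its image.

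The sole non-formal ingredient is the observation that $L$ generates $B$ as a Boolean algebra, which follows straight from the description $B_i = \power(P_i)$ by expressing singletons as differences of up-sets. Once this is secured, every assertion of the theorem is forced by Lemma~\ref{lem:automorphism-group}(1) together with the uniqueness of Boolean homomorphisms determined on a generating set.
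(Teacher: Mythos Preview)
Your proof is correct and follows essentially the same route as the paper's: build the extension by applying the functor $B(-)$ of Lemma~\ref{lem:automorphism-group}(1) on finite pieces, glue via uniqueness, and read off the topological statements from the fact that values on a finite $L_i$ determine values on $B_i$. Your explicit observation that each $L_i$ generates $B_i$ (via $\{w\} = \uparrow w \setminus \bigcup_{v>w}\uparrow v$) is exactly what underlies the uniqueness clause of the lemma, so making it the organizing principle streamlines the argument without changing its substance.
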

\begin{proof}
    Let $f : L \to L$ be an automorphism.
    Let $f_k : L_k \to L_k'$ be the restriction of $f$ to $L_k$
    where $L_k' = f(L_k)$.
    Each $f_k$ is an isomorphism.
    By the fact above,
    $f_k$ induces a Boolean algebra isomorphism
    $B(f_k) : B(L_k) \to B(L_k')$ for each $k < \omega$;
    and by construction $B(f_j)$ extends $B(f_k)$ for each $k < j < \omega$.
    Let $\hat f = \bigcup_k B(f_k)$.
    Then $\hat f$ is an isomorphism $B \to B$.

    Let $g : L \to L$ be another isomorphism.
    We have $\hat f \circ \hat g = (f \circ g){\hat{}}$
    because each side of the equation extends $f \circ g$.
    
    Let $\iota : \mathrm{Aut}(L) \to \mathrm{Aut}(B)$ be the map
    $f \mapsto \hat f$.  The map $\iota$ is a group homomorphism as seen above,
    and it is clearly injective.
    
    Next, we prove that $\iota$ is continuous.
    Let  $\bar b$ be a tuple in $B$.
    It suffices to show that 
    for an automorphism $f : L \to L$  the value of $\hat f(\bar b)$
    is determined by the value of $f(\bar a)$
    for  a tuple $\bar a$ in $L$.
    There exists $k < \omega$ such that $\bar b$ is in $B_k = B(L_k)$.
    Let $f_k : L_k \to L_k'$ be an isomorphism 
    that is a restriction of $f$.
    Then $\hat f(\bar b) = B(f_k)(\bar b)$.
    Let $\bar a$ be an enumeration of the finite algebra $L_k$;
    then $\bar a$ is what we needed.

    Finally, we show that the image $\iota(U)$ is open
    in $\operatorname{ran} \iota \subseteq \mathrm{Aut}(B)$
    for an arbitrary basic open set $U$ of
    $\mathrm{Aut}(L)$.
    Indeed, let $U$ be the set of $f : L \to L$ fixing the values of $f$
    at $\bar a \in L$;
    then $\hat g \in \iota(U)$ in and only if
    $\hat g \upharpoonright B_0 = \hat f \upharpoonright B_0$ for $g : L \to L$,
    where $B_0$ is the Boolean subalgebra of $B$ generated by $\bar a$.  
\end{proof}

Note that 
the structure $L$ is not interpretable in $B$ because
the latter is $\aleph_0$-categorical whereas the former is not.
 
There is another way $\Aut(B)$ and $\Aut(L)$ can be related.
Recall that a \emph{relativized reduct} is a special sort of interpretation
where the domain of the interpreted structure is a $0$-definable subset
of the domain (as opposed to powers thereof) of the interpreting structure.
\begin{lem}\label{lem:joinofregular}
  There is an atomless Boolean algebra which is a relativized reduct
  $B$ of $L$, where
  every element $L$ is a finite join of elements of $B$.
\end{lem}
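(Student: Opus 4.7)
The plan is to take $B$ to be the set of regular elements of $L$, that is, $B = \{x \in L : \neg\neg x = x\}$, equipped with the Boolean operations $a \sqcap b := a \wedge b$, $a \sqcup b := \neg\neg(a \vee b)$, and $\sim a := \neg a$. All of these are $0$-definable in $L$, so $B$ is a relativized reduct of $L$; by Glivenko's theorem $B$ is a Boolean algebra. The key preservation fact I rely on throughout is that a Heyting embedding preserves $\neg$, and hence regularity, between finite subalgebras of $L$ and $L$ itself.

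To establish atomlessness of $B$, I would split each nonzero regular $r \in L$ using a disjoint union of posets on the dual side. Let $A = \langle r \rangle^L$ with dual poset $P$, and let $R \subseteq P$ be the regular up-set corresponding to $r$. The projection $P \sqcup P \twoheadrightarrow P$ is a surjective p-morphism, and the \FR\ property places the dual Heyting algebra of $P \sqcup P$ inside $L$ over $A$. Inside it, $r$ becomes $(R \sqcup \emptyset) \vee (\emptyset \sqcup R)$, with each summand regular (regularity in a disjoint union of posets is componentwise), the summands Heyting-disjoint, and each strictly below $r$ in the Boolean order of $B$. This yields the required strict Boolean splitting below any nonzero regular.

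For the generation claim, the \FR\ property and preservation of regularity reduce the problem to the following combinatorial statement on finite posets: given a finite poset $P$, construct a surjective p-morphism $\pi : P^+ \twoheadrightarrow P$ of finite posets such that, for every $p \in P$, the up-set $\pi^{-1}(\uparrow_P p) \subseteq P^+$ is a finite union of regular up-sets of $P^+$. Since every up-set of $P$ is a finite union of such principal up-sets, this makes every element of the finite subalgebra dual to $P$ a finite $\vee$-join of regulars in the larger algebra dual to $P^+$, which the \FR\ property then realizes inside $L$.

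The construction I would use is $P^+ := P \sqcup \{m_z : z \in P\}$, where each $m_z$ is a new element declared to satisfy $y <_{P^+} m_z$ iff $y \le_P z$ (so $m_z$ is maximal in $P^+$, sitting above exactly $\{y \in P : y \le z\}$), with $\pi|_P = \mathrm{id}_P$ and $\pi(m_z) = \mu(z)$ for any fixed maximal element $\mu(z) \in P$ above $z$. Checking that $\pi$ is a surjective p-morphism is routine. The main technical content, and what I expect to be the chief obstacle, is the two regularity computations in $P^+$: (i) $\uparrow_{P^+} p$ is regular for every $p \in P$, with $\neg \uparrow_{P^+} p = \{x \in P : \uparrow_P x \cap \uparrow_P p = \emptyset\} \cup \{m_z : z \not\ge p\}$; and (ii) $\{m_z\}$ is regular for every non-maximal $z \in P$. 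Given these, one computes $\pi^{-1}(\uparrow_P p) = \uparrow_{P^+} p \cup \{m_z : \mu(z) \ge p,\ z \not\ge p\}$, and the constraints $\mu(z) \ge p$ and $z \not\ge p$ together force $z$ to be non-maximal (otherwise $\mu(z) = z \ge p$), so the right-hand side exhibits $\pi^{-1}(\uparrow_P p)$ as a finite union of regulars of $P^+$, closing the argument.
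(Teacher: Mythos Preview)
Your argument is correct, and both halves take a genuinely different route from the paper.

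For atomlessness, the paper argues indirectly: it shows that any atom of $B$ would already be an atom of $L$, and then invokes the fact (cited from \cite{ghilardi02:_sheav_games_model_compl}) that $L$ has no join-irreducible elements. Your doubling $P\sqcup P\twoheadrightarrow P$ is a direct, self-contained splitting of a given regular $r$ into two disjoint regular pieces, avoiding the external citation entirely.

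For the generation claim, the paper first passes to a finite subalgebra whose dual is a forest, reduces to a principal up-set $\uparrow x$, and then duplicates the single subtree above $x$ to write $a$ as a join of two regulars. Your construction $P^+=P\sqcup\{m_z:z\in P\}$ with new maximal caps is different: it does not need the forest reduction, it handles all principal up-sets of $P$ simultaneously in a single extension, and the key mechanism is that adding $m_z$ above each $z$ forces $\uparrow_{P^+}p$ to be regular (any $q\not\ge p$ now sees $m_q$ above it, and $m_q\in\neg\uparrow_{P^+}p$). The trade-off is that the paper's decomposition uses exactly two regulars per principal up-set, whereas yours may use many; but since the lemma only asks for a finite join, this costs nothing. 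Your verification that $\pi$ is a p-morphism, that $\uparrow_{P^+}p$ and $\{m_z\}$ (for non-maximal $z$) are regular, and the decomposition $\pi^{-1}(\uparrow_P p)=\uparrow_{P^+}p\cup\{m_z:\mu(z)\ge p,\ z\not\ge p\}$ all check out.
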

\begin{proof}
  The set $B$ of fixed points of $1 - (1 - \cdot)$ in $L$ is a Boolean algebra
  by setting $a \wedge^B b = \neg\neg(a \wedge^L b)$
  and the remaining operations of $B$
  the restrictions of the corresponding operations of $L$.
  (Note that $B$ is not a substructure of $L$.)

  Suppose that $a \in B$ is an atom of $B$.
  We show that $a$ is also an atom of $L$.
  To see this, assume the contrary,
  and let $b$ be such that $0 < b < a$, where $b \not \in B$.
  Since $b \not \in B$, we have $1 - (1 - b) \neq b$;
  since $1 - (1 - c) \le c$ for all $c \in B$, we have $1 - (1 - b) < b$.
  Now $1 - (1 - b) \in B$ and $0 < 1 - (1 - b)$ (since $1 - b < 1$),
  so we have $0 < 1 - (1 - b) < a$,
  contradicting the assumption that $a$ is an atom of $B$.

  We have seen that any atom in $B$ is an atom of $L$.
  Since there are no join-irreducible elements (let alone atoms) in $L$
  \cite[Proposition 4.28.(iii)]{ghilardi02:_sheav_games_model_compl},
  $B$ is atomless.

  Let $a \in L$ be arbitrary.
  Take a finite subalgebra $H \subseteq L$ such that $a \in H$,
  and let $\mathbb P$ be the dual poset of $H$
  so we may identify an element of $H$ with an up-set of $\mathbb P$.
  Possibly by replacing $L$ by another finite Heyting algebra into which
  $L$ embeds,
  we may assume that $\mathbb P$ is a forest.
  Furthermore,
  without loss of generality, we may assume that $a$ is principal
  as a subset of $\mathbb P$,
  generated by $x \in \mathbb P$.
  If $x$ is a root, then $a$ itself is regular, so there remains nothing
  to be shown.
  Suppose not,
  and let $x^-$ be the predecessor of $x$.
  Let $\mathbb P_1, \mathbb P_2$ be disjoint posets isomorphic
  to that induced by $a \subseteq \mathbb P$.
  Let $\mathbb P' := (\mathbb P \setminus a)
  \sqcup \mathbb P_1 \sqcup \mathbb P_2$
  whose partial order is the least containing those of the summands 
  and $x^- \le \mathbb P_1$, $x^- \le \mathbb P_2$.
  Consider the surjective p-morphism $\mathbb P' \twoheadrightarrow \mathbb P$
  that collapses $\{\min \mathbb P_1, \min \mathbb P_2\}$ to $x$,
  and let $i :H \hookrightarrow H'$ be the Heyting algebra embedding it induces.
  (See also Figure~\ref{fig:mine}.)
  Note that $\mathbb P_i \in H'$ is regular for $i = 1, 2$
  and that $i(a) = \mathbb P_1 \vee \mathbb P_2$.
  Let $H_r(a)$ be a subalgebra of $L$ such that
  there is an isomorphism $\phi : H' \to H_r(a)$
  that extends the identity map on $H$.
  Let $r_1(a) := \phi(\mathbb P_1)$ and $r_2(a) := \phi(\mathbb P_2)$.
  We have $a = r_1(a) \vee r_2(a)$
  and $r_i(a) \in B$ $(i = 1,2)$ as promised.
  \begin{figure}[htbp]
    \centering{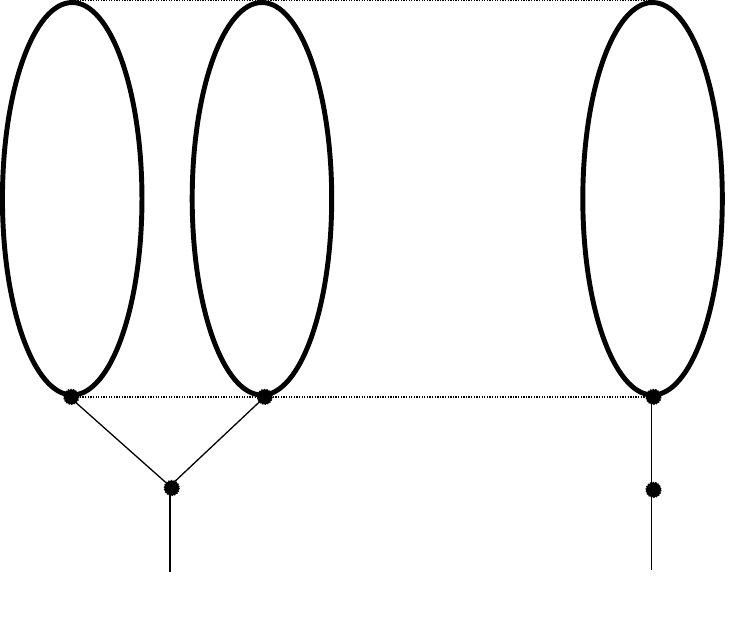}
    \caption{Construction of $H'$.}
    \label{fig:mine}
  \end{figure}
\end{proof}

\hyphenation{homeo-morphism}
\begin{prop}
  Let $h_{\neg\neg} : \Aut(L) \to \Aut(B)$ be the continuous homomorphism
  induced by the interpretation of the Lemma above.
  This is injective
  and is a homeomorphism onto its image.
  However, $h_{\neg\neg}$ is not surjective, and 
  its image is a non-dense non-open subset of $\Aut(B)$.
\end{prop}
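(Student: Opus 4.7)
The plan is to verify the four claims in turn, using Lemma~\ref{lem:joinofregular} and the Roelcke-non-precompactness theorem established earlier. Write $\hat f := h_{\neg\neg}(f)$. Injectivity: if $\hat f = \mathrm{id}_B$, then for each $a \in L$ Lemma~\ref{lem:joinofregular} gives $a = r_1(a) \vee r_2(a)$ with $r_i(a) \in B$, so $f(a) = \hat f(r_1(a)) \vee \hat f(r_2(a)) = a$. Continuity of $h_{\neg\neg}$ is immediate since $\hat f \upharpoonright \bar b = f \upharpoonright \bar b$ for $\bar b \subseteq B \subseteq L$, and continuity of the inverse on the image follows from the same lemma: $f \upharpoonright \bar a$ is recovered from $\hat f$ restricted to the finite set $\bigcup_{a \in \bar a}\{r_1(a), r_2(a)\}$.

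For non-denseness (which entails non-surjectivity), I exhibit $c, d \in B$ with identical $B$-type but distinct $L$-types. Take $c \in B$ that is complemented in $L$, e.g.\ by embedding a $4$-element Boolean algebra into $L$; take $d \in B$ regular but not $L$-complemented, namely the image of $\{v_1\}$ under an embedding into $L$ of the Heyting algebra dual to the V-shape poset $\{w < v_1,\, w < v_2\}$ (one checks $\neg\{v_1\} = \{v_2\}$, so $d$ is regular, while $\{v_1\} \vee \{v_2\} \neq 1$). Both are nontrivial in the countable atomless $B$, so some $\psi \in \Aut(B)$ sends $c \mapsto d$. A lift $\hat f = \psi$ would give $f(c) = d$, contradicting the $L$-preservation of complementedness under $f$. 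Hence the basic open set $\{\phi \in \Aut(B) : \phi(c) = d\}$ is a neighborhood of $\psi$ disjoint from the image.

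For non-openness, I argue by contradiction. If the image were open, it would contain $\Aut(B)_{(\bar c)}$ for some finite $\bar c \subseteq B$, and since $h_{\neg\neg}^{-1}(\Aut(B)_{(\bar c)}) = \Aut(L)_{(\bar c)}$, the restriction of $h_{\neg\neg}$ would be a topological isomorphism $\Aut(L)_{(\bar c)} \cong \Aut(B)_{(\bar c)}$. The latter is oligomorphic on the countable $B$, so it has finitely many orbits on each $B^n$; by Lemma~\ref{lem:joinofregular}, every tuple in $L^n$ is determined by a tuple in $B^{2n}$ via the pair-of-regulars decomposition, so $\Aut(L)_{(\bar c)}$ would have only finitely many orbits on $L^n$ as well. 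However, the construction used in the proof of the earlier Roelcke-non-precompactness theorem produces infinitely many $\emptyset$-types of pairs in $L$, and these remain distinct over any finite parameter set $\bar c$ (types over larger parameter sets refine those over smaller), so $\Aut(L)_{(\bar c)}$ has infinitely many orbits on $L^2$, a contradiction. The crux of this last paragraph will be the transfer of oligomorphicity from $B$ to $L$ for the stabilizer $\Aut(L)_{(\bar c)}$, where Lemma~\ref{lem:joinofregular} does the essential work of bounding orbits on $L^n$ by orbits on $B^{2n}$.
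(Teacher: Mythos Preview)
Your proof is correct, and it differs from the paper's in both of the nontrivial parts.

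For non-denseness, the paper builds an explicit six-atom Boolean subalgebra $B_6 \subseteq B$ via two rounds of superamalgamation and exhibits a permutation of its atoms which, extended to $\Aut(B)$, fails to respect a specific $L$-join. Your argument is more elementary: you isolate a single $\emptyset$-definable $L$-property (being complemented) that distinguishes two regular elements $c,d$ of the same $B$-type, so any $\psi\in\Aut(B)$ with $\psi(c)=d$ lies in an open set missing the image. This is shorter and avoids the amalgamation machinery entirely. (A remark: to justify the uniform bound ``$2n$'' in the last paragraph, note that for any $a\in L\setminus\{0,1\}$ the $3$-chain $\{0,a,1\}$ has dual a $2$-chain in which $a$ is principal, so the construction in Lemma~\ref{lem:joinofregular} already yields $a=r_1(a)\vee r_2(a)$ with two regular summands; you may want to say this explicitly.)

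For non-openness, the paper again argues constructively: given any finite $\bar b$, it superamalgamates a copy of the same $B_6$ freely over a subalgebra generating $\langle\bar b\rangle^B$, and extends the bad permutation to an element of $\Aut(B)_{(\bar b)}\setminus\ran h_{\neg\neg}$. Your route is conceptual: openness of the image would force $\Aut(L)_{(\bar c)}\cong\Aut(B)_{(\bar c)}$, hence oligomorphicity of the left-hand side on $L$ via the equivariant surjection $B^{2n}\twoheadrightarrow L^n$, contradicting the Roelcke non-precompactness theorem. Your argument is cleaner but not self-contained (it imports the earlier theorem); the paper's is independent but heavier on bookkeeping. One could streamline your version further: an open subgroup of the Roelcke-precompact group $\Aut(B)$ is itself Roelcke precompact, so $\Aut(L)\cong\ran h_{\neg\neg}$ would be too --- this bypasses the orbit count on $L^n$ altogether.
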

\begin{proof}
  The first claim is immediate.
  We show that $h_{\neg\neg}$ is not surjective.

  Consider the 3-element chain $C_3$,
  which can be regarded as a Heyting algebra,
  and let $a \in C_3$ be such that $0 < a < 1$.
  Note that $a$ is irregular and
  a principal up-set in the dual finite poset of $C_3$.
  Let $D$ be the diagram $C_3 \hookleftarrow \mathbf 2 \hookrightarrow C_3$,
  where $\mathbf 2$ is the 2-element Heyting algebra.
  Let $a_0 = \il^D(a)$, $a_{1.5} = \ir^D(a)$, and $H = H_r(a_{1.5})$.
  Next, let $D'$ be the diagram
  $H \hookleftarrow \il^D(C_3) \hookrightarrow H$.
  Let $a_{1i} = \il^{D'}(r_i(a_{1.5}))$, $a_{2i} = \ir^{D'}(r_i(a_{1.5}))$
  and $a_{0i} = r_i(a_0)$ for $i=1,2$.
  Refer to Figure~\ref{fig:caitlin} for this construction.
  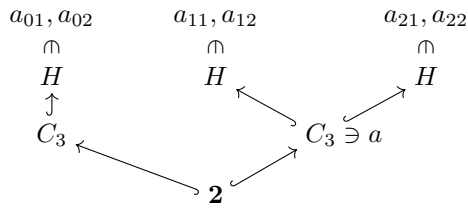
\begin{figure}[htbp]
    \newcommand\lin{\protect\rotatebox{270}{$\in$}}
    \centering
    \begin{tikzcd}[cramped, sep=small]
      a_{01}, a_{02} \ar[d, phantom, "\lin"] & & a_{11}, a_{12} \ar[d, phantom, "\lin"]& & a_{21}, a_{22} \ar[d, phantom, "\lin"]\\
      H & & H & & H \\
      C_3 \ar[u, hook ]& & & C_3 \ar[ul, hook] \ar[ur, hook] \ar[r, phantom, "\ni a",  near start]& {}\\
      & & \mathbf{2} \ar[ull, hook] \ar[ur, hook]
    \end{tikzcd}
    \caption{Construction by amalgamation}
    \label{fig:caitlin}
  \end{figure}
  
  The Boolean subalgebra $B_6$ generated by $a_{ji}$
  ($0 \le j \le 2, 1 \le i \le 2$)
  in $B$
  has six atoms,
  each permutation of which extends to an automorphism of $B$.
  Consider the permutation $a_{ji} \mapsto a_{(j + 1 \bmod 3)i}$,
  which extends to an automorphism of $B_6$,
  which in turn extends to $\phi \in \Aut(B)$ by ultrahomogeneity of $B$.
  By construction,
  \[\bigvee_L \phi(\{a_{11}, a_{12}\}) \neq \bigvee_L \phi(\{a_{21}, a_{22}\})\]
  showing that $\phi$ is not in the image of $h_{\neg\neg}$.

  The last paragraph also shows that the image of $h_{\neg\neg}$ is not dense.
  To see that $\ran h_{\neg\neg}$ is not open,
  let $\overline b$ be an arbitrary tuple in $B$,
  and we prove that $\Aut(B)_{(\overline b)} \setminus \ran h_{\neg\neg} \neq \0$.
  Take  a finite subalgebra $H$ of $L$
  such that $H$ generates $\langle \overline a\rangle^B$ as a Boolean algebra.
  Let $D''$ be the diagram%
  \footnote{To be more precise, one can replace $\bigsqcup D$ by
  an appropriate copy by the weak homogeneity of $L$.}
  $H \hookleftarrow \mathbf 2 \hookrightarrow \bigsqcup D$.
  The image $\ran \ir^{D''}$ generates a copy $B'_6$ of $B_6$.
  Take an automorphism $\psi_0$ on $\bigsqcup D''$
  $\psi_0 \upharpoonright B'_6$ is as constructed in the preceding paragraph and
  that $\psi_0 \upharpoonright \ran \il^{D''}$ is the identity.%
  \footnote{The existence of such an automorphism can be proved in terms of the concrete representation of the $\bigsqcup D''$.}
  The automorphism $\psi_0$ extends to another $\phi \in \Aut(B)$,
  which is in $\Aut(B)_{(\overline b)} \setminus \ran h_{\neg\neg}$.
\end{proof}

\section{Amenability and simplicity}
We now proceed to showing the non-amenability of $\Aut(L)$.
\newcommand{\alex}{\mathrm{alex}}
\begin{defi}
  Let $H$ be a finite nondegenerate Heyting algebra.
  For $b \in H$, we write $I(b)$ for the set of join-prime elements below or equal to $b$.
  Let $\prec$ be an arbitrary linear extension of the partial order on $I(1)$
  induced from $H$.
  We define a total order $\prec^\alex$ on $H$ extending $\prec$
  by the following:
  \[
    a \prec^\alex a' \iff \max_\prec (I(a) \mathbin{\triangle} I(a')) \in I(a').
  \]
  This is clearly a total order, which is
  known as the anti-lexicographic order.
  We call this a \emph{natural ordering} on $H$.
  
  An expansion of a finite nondegenerate Heyting algebra $H$ by a
  natural total order
  is called a \emph{finite Heyting algebra with a natural ordering}.
\end{defi}

It is easy to check that if 
$(H, \prec)$ is a finite Heyting algebra with a natural ordering,
and $H$ happens to be a Boolean algebra,
then $(H, \prec)$ is a finite Boolean algebra with a natural ordering
in the sense of Kechris, Pestov, and Todor\v{c}evi\'c~\cite{kechris05:_fra_ramsey_theor_topol_dynam_autom_group}.
Recall from  the same paper that an order expansion $\mathcal C^*$ of a \FR{}
class $\mathcal C$ is \emph{reasonable}
if
there exists some \emph{admissible} order $\prec_2$ on $M_2$,
i.e., an order such that $(M_2, \prec_2) \in \mathcal{C}^*$,
which extends $\prec_1$
whenever $M_1, M_2 \in\mathcal C$ with $M_1$ a subalgebra of $M_2$,
and $\prec_1$ is admissible on $M_1$.

\begin{prop}
  The class $\mathcal K^*$ of finite Heyting algebras with a natural ordering
  is a reasonable \FR\ expansion of $\mathrm{Age}(L)$.
\end{prop}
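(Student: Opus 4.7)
My plan is to verify in turn the hereditary property (HP), the amalgamation property (AP), and the reasonableness condition, with joint embedding following from AP together with the fact that the two-element Heyting algebra $\mathbf{2} \in \mathcal{K}$ embeds into every nondegenerate finite Heyting algebra. Throughout I work via the duality of §1: a finite Heyting algebra $M \in \mathcal{K}$ has dual poset $\mathbb{P}_M = I_M(1)$ equipped with the order reversed from $M$; an embedding $M_1 \hookrightarrow M_2$ is dual to a surjective p-morphism $\pi: \mathbb{P}_2 \twoheadrightarrow \mathbb{P}_1$; and for $a \in M_1$ one has the identity $I_{M_2}(a) = \pi^{-1}(I_{M_1}(a))$.

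For reasonableness, given such an embedding and an admissible linear order $\prec_1$ on $\mathbb{P}_1$, I construct $\prec_2$ on $\mathbb{P}_2$ by block-ordering: the fibers $\pi^{-1}(p)$ for $p \in \mathbb{P}_1$ are arranged en bloc in $\prec_1$-order, and within each fiber I choose an arbitrary linear extension of the restriction of $\leq_{M_2}$. Monotonicity of $\pi$ ensures that if $u \leq_{M_2} v$ with $\pi(u) \neq \pi(v)$ then $\pi(u) \leq_{M_1} \pi(v)$, so $\prec_2$ is a valid linear extension of $\leq_{M_2}$. Using $I_{M_2}(a) \triangle I_{M_2}(b) = \pi^{-1}(I_{M_1}(a) \triangle I_{M_1}(b))$ for $a, b \in M_1$, a direct calculation shows that the $\prec_2$-maximum of this symmetric difference lies in the fiber over the $\prec_1$-maximum of $I_{M_1}(a) \triangle I_{M_1}(b)$, yielding $\prec_2^\alex|_{M_1} = \prec_1^\alex$.

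For HP, given natural $\prec_2$ on $M_2$ and a subalgebra $M_1 \subseteq M_2$, I define $\prec_1$ on $I_{M_1}(1)$ as the restriction of $\prec_2^\alex|_{M_1}$ to the join-primes of $M_1$. This $\prec_1$ extends $\leq_{M_1}$ because $\prec_2^\alex$ extends $\leq_{M_2}$, which agrees with $\leq_{M_1}$ on $M_1$; the identification $\prec_1^\alex = \prec_2^\alex|_{M_1}$ reduces to the fact that, for $a, b \in M_1$, whether the $\prec_2$-maximum of $\pi^{-1}(I_{M_1}(a) \triangle I_{M_1}(b))$ lies in $I_{M_2}(b)$ depends only on whether its $\pi$-image lies in $I_{M_1}(b)$. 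For AP, given an ordered span $(M_1,\prec_1) \hookleftarrow (M_0,\prec_0) \hookrightarrow (M_2,\prec_2)$ in $\mathcal{K}^*$, the superamalgamation of $\mathrm{Age}(L)$ yields $M = \comext D$ with $M_1 \indep_{M_0} M_2$, and I construct $\prec$ on $\mathbb{P}_M$ in two stages: first block-ordering the fibers of $\mathbb{P}_M \twoheadrightarrow \mathbb{P}_0$ by $\prec_0$, and then within each such fiber reconciling the block-ordering constraints imposed by $\prec_1$ (through $\mathbb{P}_M \twoheadrightarrow \mathbb{P}_1$) and by $\prec_2$ (through $\mathbb{P}_M \twoheadrightarrow \mathbb{P}_2$).

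The main obstacle I anticipate is the compatibility check in AP, namely that these two sets of block-ordering constraints can be simultaneously realized on $\mathbb{P}_M$. The anchor condition $M_1 \indep_{M_0} M_2$ ensures that any order relation in $\mathbb{P}_M$ between a point over $\mathbb{P}_1 \setminus \mathbb{P}_0$ and one over $\mathbb{P}_2 \setminus \mathbb{P}_0$ is mediated by a point over $\mathbb{P}_0$, which should preclude direct conflicts; the verification will require a careful tracing through the fibered structure $\mathbb{P}_M \twoheadrightarrow \mathbb{P}_i \twoheadrightarrow \mathbb{P}_0$.
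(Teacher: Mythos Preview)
Your argument for reasonableness is essentially the paper's: both pull back along the surjective p-morphism $\pi:\mathbb P_2\twoheadrightarrow\mathbb P_1$ and extend the resulting partial constraints to a linear order on $\mathbb P_2$; your block-ordering is just a specific way of doing this. The verification via $I_{M_2}(a)\mathbin{\triangle} I_{M_2}(b)=\pi^{-1}\bigl(I_{M_1}(a)\mathbin{\triangle} I_{M_1}(b)\bigr)$ is the same.

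For the amalgamation property, however, your two-stage plan has a real problem, and the paper avoids it by a different route. You propose to block-order $\mathbb P_M$ first along the fibers of $\mathbb P_M\twoheadrightarrow\mathbb P_0$ by $\prec_0$, and only then impose the $\prec_1$- and $\prec_2$-constraints within each fiber. But the hypothesis $\prec_1^{\alex}\restriction M_0=\prec_0^{\alex}$ does \emph{not} force $\prec_1$ itself to be block-ordered over $\prec_0$. Concretely, take $\mathbb P_0=\{p,q\}$ an antichain with $p\prec_0 q$, and $\mathbb P_1=\{p_1,p_2,q_1,q_2\}$ an antichain with fibers $\{p_1,p_2\}$ and $\{q_1,q_2\}$; the order $p_1\prec_1 q_1\prec_1 p_2\prec_1 q_2$ satisfies $\prec_1^{\alex}\restriction M_0=\prec_0^{\alex}$ (the symmetric difference $\{p_1,p_2,q_1,q_2\}$ has $\prec_1$-maximum $q_2$). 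Now take $M_2=M_0$, so $M=M_1$. Your construction yields $p_1\prec p_2\prec q_1\prec q_2$, and then for the singletons $\{q_1\},\{p_2\}\in M_1$ one computes $\{p_2\}\prec^{\alex}\{q_1\}$ while $\{q_1\}\prec_1^{\alex}\{p_2\}$. So $\prec^{\alex}$ fails to extend $\prec_1^{\alex}$, precisely because your first stage already overcommits.

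The paper sidesteps this by exploiting Maksimova's concrete description of the superamalgam: the dual $\mathbb P$ of $\comext D$ is a sub-poset of the product $\mathbb P_1\times\mathbb P_2$, with the two embeddings induced by the coordinate projections. One then takes $\prec$ to be any linear extension of the \emph{product} of $\prec_1$ and $\prec_2$ on $\mathbb P$. Because the projections $\pi_i:\mathbb P\to\mathbb P_i$ are the duals of $\iota^D_\hookleftarrow,\iota^D_\hookrightarrow$, one gets $\pi_i^{-1}(p)\prec\pi_i^{-1}(p')$ whenever $p\prec_i p'$, for $i=1,2$ simultaneously, and the block argument you used for reasonableness then gives $\prec^{\alex}\supseteq\prec_i^{\alex}$ for both $i$ at once. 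The anchor relation $\indep$ is not invoked at all; the product structure does the work. If you want to salvage your approach, you would need to replace the $\prec_0$-block ordering by a genuinely two-sided construction that treats $\prec_1$ and $\prec_2$ symmetrically from the start---which is exactly what the product order does.
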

\begin{proof}
  We show that $\mathcal K^*$ is reasonable and
  that $\mathcal K^*$ has the amalgamation property.
  (Other claims are clear.)
  In what follows, for a totally ordered set $(X, <)$ and $Y, Z \subseteq X$,
  we write $Y < Z$ to mean that $y < z$ whenever $y \in Y$ and $z \in Z$.

  Let $H_1 \subseteq H_2$ be finite Heyting algebra, and
  let $\prec_1^\alex$ be an arbitrary admissible total order on $H_1$.
  We show that there exists an admissible order on $H_2$ extending $\prec_1^\alex$.
  Let $\pi: \mathbb P_2 \twoheadrightarrow \mathbb P_1$ be the surjective p-morphism dual to the inclusion map $H_1 \hookrightarrow H_2$.
  Note that with  $I(1_{H_i})$ and $\mathbb P_i$ identified as pure sets,
  an admissible total order of $H_i$ extends the dual of the order of $\mathbb P_i$ for $i = 1,2$.
  
  Suppose that for $p,q \in \mathbb P_1$ we have $p \prec_1 q$.
  Since $\prec_1^\alex$ is admissible, $p \not \le q$.
  Take arbitrary $p', q' \in \mathbb P_2$ such that $\pi(p') = p$
  and that $\pi(q') = q$.
  Since $\pi$ is order-preserving \emph{a fortiori},
  we have $p' \not \le q'$.

  Let $R = (\mathord{\le} \setminus \Delta)
  \cup \{(p',q') \mid \pi(p') \prec_2 \pi(q')\}$ be a binary relation on
  $\mathbb P_2 = I(1_{H_2})$,
  where $\Delta$ is the diagonal relation.
  It can be shown by induction from the fact in the preceding paragraph
  that $R$ contains no cycle.
  Therefore, $R$ can be extended to a total order $\prec_2$.
  Furthermore, for $p, q \in \mathbb P_1$, 
  we have $\pi^{-1}(p) \prec_2 \pi^{-1}(q)$;
  \emph{a fortiori}, $\pi^{-1}(p) \prec_2^\alex \pi^{-1}(q)$.
  This shows that $\prec_2^\alex$ extends $\prec_1^\alex$.

  Next, we prove the amalgamation property for $\mathcal K^*$.
  Let $D$ be the diagram
  $H_1 \hookleftarrow H_0 \hookrightarrow H_2$
  in $\mathrm{Age}(L)$
  and let $\prec_i^\alex$ be an arbitrary admissible ordering on $H_i$ for $i = 1,2$.
  Recall the dual poset $\mathbb P$ of $\comext D$
  is a sub-poset of the product order $\mathbb P_1 \times \mathbb P_2$,
  where $\mathbb P_i$ is the dual of $H_i$ ($i = 1, 2$) \cite{Maksimova1977}.
  Define a total order $\prec$ on $\mathbb P$
  so it extends the product order of $\prec_1$ and $\prec_2$.

  We first show that $\prec$ extends the dual of the order of $\mathbb P$.
  Assume that $(p_1,p_2) \le (q_1, q_2)$ for $(p_i,q_j) \in \mathbb P$
  and $1 \le i, j \le 2$.
  (Recall that $p_i, q_i \in \mathbb P_i$.)
  Since the order of $\mathbb P$ is induced by the product of those of
  $\mathbb P_1$
  and $\mathbb P_2$,
  we have $p_i \le q_i$ for $i = 1,2$.
  Because $\prec_i$ extends the dual of the order of $\mathbb P_i$,
  we have $p_i \succ_i q_i$ ($i = 1, 2$).
  By the construction of $\prec$, we have $(p_1,p_2) \succ (q_1, q_2)$
  as desired.

  We then
  prove that $(\comext D, \prec^\alex)$  witnesses the amalgamation property.
  Because of the strong amalgamation property of $\mathrm{Age}(L)$,
  it suffices to show that $\prec^\alex$ extends $\il^D(\prec_1^\alex)$
  and $\ir^D(\prec_2^\alex)$.
  Take $p, p' \in \mathbb P_1$, and assume that $p \prec p'$
  (the other case can be handled in a similar manner).
  Since $\il^D$ is induced by the projection
  $\pi_1 : \mathbb P \twoheadrightarrow \mathbb P_1$,
  it suffices to show that $\pi^{-1}(p) \prec^\alex \pi^{-1}(p')$.
  Now, it is easy to see that, in fact, $\pi^{-1}(p) \prec \pi^{-1}(p')$
  by the construction of $\prec$.
\end{proof}

\begin{cor}\mbox{}
   $\Aut(L)$ is not amenable.
\end{cor}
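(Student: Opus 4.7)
The plan is to apply the Kechris-Pestov-Todor\v{c}evi\'c amenability correspondence: since the preceding proposition shows that $\mathcal K^*$ is a reasonable \FR\ expansion of $\mathrm{Age}(L)$, the amenability of $\Aut(L)$ is equivalent to the existence of an invariant Borel probability measure on the compact $\Aut(L)$-flow $X \subseteq 2^{L\times L}$ of admissible orderings on $L$. By restricting to finite substructures, such a measure on $X$ corresponds to a consistent family $(\mu_A)_{A \in \mathrm{Age}(L)}$ of $\Aut(A)$-invariant probability measures on the finite sets of admissible orderings of each $A \in \mathrm{Age}(L)$, where consistency means that for every embedding $i : A \hookrightarrow B$ in $\mathrm{Age}(L)$ the push-forward of $\mu_B$ under restriction along $i$ coincides with $\mu_A$.

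To refute the existence of such a family, I would exploit the superamalgamation construction $\comext$ recalled in the preliminaries. Using an appropriate diagram $D$, for instance $C_3 \hookleftarrow \mathbf 2 \hookrightarrow C_3$ as in the preceding section, one iterates $\comext$ to produce, inside $\mathrm{Age}(L)$, a sequence $B_1 \hookrightarrow B_2 \hookrightarrow \cdots$ where each $B_n$ contains $n$ ``independent'' copies of a common subalgebra $A$, permuted transitively by a subgroup of $\Aut(B_n)$. Invariance of $\mu_{B_n}$ under this permutation forces its push-forward along the $n$ restriction maps to be symmetric under a coordinate-permutation action, while consistency identifies each one-coordinate marginal with $\mu_A$.

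The obstruction then comes from the anti-lexicographic formula for $\prec^\alex$: the set of admissible orderings on $B_n$ is in bijection with the linear extensions of the join-prime poset of $B_n$, which interleaves the join-primes of the $n$ copies of $A$ so tightly that the image of the restriction map from orderings on $B_n$ to $n$-tuples of orderings on $A$ is a strict subset of the full product. A counting argument is then expected to show that no symmetric $\mu_{B_n}$ with the prescribed marginals can exist once $n$ is large enough, completing the contradiction.

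The main difficulty is carrying out this combinatorial count rigorously: the anti-lexicographic order is sensitive to the \emph{entire} linear extension rather than to pairs of elements, so estimating the image of the restriction map and verifying that consistency fails requires an explicit analysis of how the join-primes of the copies of $A$ sit among the join-primes of the iterated amalgam, and of how the copy-permuting subgroup of $\Aut(B_n)$ acts on these extensions. The contrast with the Boolean case (where $\Aut(B)$ \emph{is} amenable) is informative: there, atoms play the role of join-primes and are mutually incomparable, so the anti-lexicographic data factors cleanly as a product; it is precisely the nontrivial partial-order structure on the join-primes of a Heyting algebra, preserved by $\comext$, that will obstruct such factoring and drive the proof.
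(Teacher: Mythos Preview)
Your proposal is a plan rather than a proof, and it rests on a factual error that misdirects the entire strategy. You assert parenthetically that $\Aut(B)$ \emph{is} amenable for the countable atomless Boolean algebra $B$, and then locate the obstruction for $L$ in ``the nontrivial partial-order structure on the join-primes of a Heyting algebra''. In fact $\Aut(B)$ is \emph{not} amenable---this is precisely what the reference \cite{Kechris2012} cited in the paper establishes---and the obstruction already appears for finite Boolean algebras, whose join-primes are pairwise incomparable atoms. So the feature you single out as decisive is irrelevant, and the elaborate programme of iterated superamalgams $B_n$, symmetric-group actions on copies, and a counting analysis of anti-lexicographic restrictions is aimed at the wrong target. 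You also leave the ``main difficulty'' (that count) entirely unresolved, so even on its own terms the proposal is not a proof.

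The paper's argument is far shorter and bypasses all of this. It quotes a combinatorial non-amenability criterion (Proposition~2.2 of \cite{Kechris2012}): one needs $A,B\in\mathrm{Age}(L)$ and, for each admissible order $<$ on $A$, an embedding $\iota_<:A\to B$ satisfying two finite conditions. These are witnessed by the $4$-element Boolean algebra $A$ with atoms $a,b$, the $8$-element Boolean algebra $B$ with atoms $x,y,z$, the two embeddings $a\mapsto x,\ b\mapsto y\vee z$ and $a\mapsto y,\ b\mapsto x\vee z$, and the order on $B$ induced by $z<'y<'x$. Since finite Boolean algebras lie in $\mathrm{Age}(L)$ and their natural orderings qua Boolean algebras coincide with their natural orderings qua Heyting algebras, these witnesses carry over verbatim; one line of anti-lexicographic checking finishes the proof. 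If you want to salvage your measure-theoretic viewpoint, note that the same tiny example already blocks a consistent family $(\mu_A)$: pushing the uniform measure on the six natural orderings of the $8$-element $B$ forward along the two embeddings above yields the two distinct measures $(2/3,1/3)$ and $(1/3,2/3)$ on the orderings of $A$.
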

\begin{proof}
  We will make use of the following proposition:
  \begin{prop*}[{\cite[Proposition 2.2]{Kechris2012}}]
    Let $\mathcal C$ be a \FR{} class
    and $\mathcal C^*$ a \FR{} order expansion of $\mathcal C$
    that is reasonable and has the ordering property.
    Moreover,
    suppose that there are $A, B \in \mathcal C$
    and an embedding $\iota_< : A \to B$
    for each admissible ordering $<$ on $A$
    with the following properties:
    \begin{enumerate}[(i)]
    \item There is an admissible ordering $<'$ on $B$
      such that for every admissible ordering $<$ on $A$,
      the function $\iota_<$ does not embed $(A, <)$ into $(B, <')$;
    \item
      For any two distinct admissible orderings $<_1$, $<_2$ on $A$,
      there exists an admissible ordering $<'$ on $B$
      such that at least one of $\iota_{<_1}$ and $\iota_{<_2}$
      fails to embed $(A, <_1)$ or $(A, <_2)$, respectively,
      into $(B, <')$.
    \end{enumerate}
    Then, the automorphism of the \FR{} limit of $\mathcal C$
    is not amenable.
  \end{prop*}
  Consider the following construction appearing in 
  \cite[Remark 3.1]{kechris05:_fra_ramsey_theor_topol_dynam_autom_group}.
  Let $A$ be the finite Boolean algebra with the atoms $a$ and $b$
  and $B$ with $x$, $y$, and $z$.
  For the order $<_1$ which extends $a <_1 b$, define:
  \begin{align*}
    \pi_{<_1}(a) := x,&& \pi_{<_2}(b) := y \vee z
  \end{align*}
  Moreover, for the order $<_2$ which extends $b <_2 a$:
  \begin{align*}
    \pi_{<_2}(a) := y, && \pi_{<_2}(b) := x \vee z
  \end{align*}
  Let $<'$ be defined as extending $z <' y <' x$.
  The objects defined above 
  witness the conditions (i) and (ii).
  We conclude that $\Aut(L)$ is not amenable.
\end{proof}

Finally, we study the aspects of the combinatorics of $\mathrm{Age}(L)$
pertaining to the extreme amenability of $\Aut(L)$.
The Kechris-Pestov-Todor\v{c}evi\'c correspondence concerns
order expansions of the ages of ultrahomogeneous structures with the ordering
property \cite{kechris05:_fra_ramsey_theor_topol_dynam_autom_group}.
One can make an empirical observation that
many arguments establishing the ordering property of an
order expansion of a \FR\ class
fall into two categories:
one based on a lower-dimensional Ramsey property
and the other rather trivially using the order-forgetfulness
of the expansion.
The former is applied to many classes of relational structures such as graphs,
whereas the latter is used with the countable atomless Boolean algebras
and the infinite-dimensional vector space over a finite field.
Our structure $L$ is similar to the latter classes of structures.
However, we see the following.
\begin{prop}
  There is no \FR\ order class of isomorphism types
  that expands the class of finite Heyting algebras and is order-forgetful.
\end{prop}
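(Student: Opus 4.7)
I would argue by contradiction, using the superamalgamation property of $\mathrm{Age}(L)$ to force two conflicting linear orderings on the same pair of elements. Suppose $\mathcal K^*$ is a Fra\"iss\'e order expansion of $\mathrm{Age}(L)$ that is order-forgetful. Then for every $H \in \mathrm{Age}(L)$ the admissible orderings on $H$ form a single $\Aut(H)$-orbit. Since no nontrivial permutation of a finite set can preserve any linear order (iterating such a permutation on a non-fixed point would produce an unbounded ascending chain), the stabilizer in $\Aut(H)$ of each admissible order is trivial. In particular, a rigid $H$ carries a \emph{unique} admissible order $<_H$, and every embedding $\iota\colon H \hookrightarrow M$ with $(M,<)\in\mathcal K^*$ is automatically an order-embedding of $(H,<_H)$ into $(M,<)$, i.e., $a <_H b \iff \iota(a) < \iota(b)$ for all $a,b \in H$.

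For the witness take $H$ to be the six-element Heyting algebra dual to the poset $\mathbb P = \{p,q,r\}$ with $p<q$ and $r$ isolated. A direct check shows that $H$ is rigid (the three elements of $\mathbb P$ are pairwise structurally distinguishable) and that $K := \{\emptyset,\{p,q\},\{r\},\mathbb P\}$ is a subalgebra of $H$ isomorphic to $\mathbf 2\times\mathbf 2$, whose two atoms I label $a_2 := \{p,q\}$ and $a_3 := \{r\}$. The nontrivial swap $\sigma\in\Aut(K)$ exchanging $a_2$ and $a_3$ cannot extend to an automorphism of $H$, again because $H$ is rigid.

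Now apply the superamalgamation property to the diagram $D\colon H \hookleftarrow K \hookrightarrow H$ whose left arrow $\iota_l$ is the inclusion and whose right arrow is $\iota_r := \iota_l\circ\sigma$. This yields $\comext D \in \mathrm{Age}(L)$ together with embeddings $\il^D, \ir^D \colon H \hookrightarrow \comext D$ satisfying $\il^D \circ \iota_l = \ir^D \circ \iota_r$. Consequently the elements $e := \il^D(a_2) = \ir^D(a_3)$ and $f := \il^D(a_3) = \ir^D(a_2)$ of $\comext D$ are distinct by strong amalgamation. Since $\mathcal K^*$ is an expansion of $\mathrm{Age}(L)$, the algebra $\comext D$ carries some admissible order $<$. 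Applying the order-embedding statement from the first paragraph to $\il^D$ gives $e < f \iff a_2 <_H a_3$, while applying it to $\ir^D$ gives $e < f \iff a_3 <_H a_2$; these biconditionals cannot simultaneously hold under the total order $<_H$, a contradiction.

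The chief obstacle is locating the right witness pair $(H, K)$: one needs a rigid finite Heyting algebra containing a subalgebra whose automorphism group acts nontrivially and does not lift to the ambient algebra. The six-element $H$ above is minimal in $\mathrm{Age}(L)$ for this purpose, and once it has been identified the rest of the argument is essentially formal, relying only on the superamalgamation property established in Section~1.
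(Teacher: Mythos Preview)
Your argument is correct, and it even uses the very same six-element algebra $H$ (dual to a $2$-chain together with an isolated point) that the paper uses, but the mechanism is different. The paper avoids amalgamation altogether: it exhibits a concrete overstructure $H'$ (dual to the disjoint union of two $2$-chains) together with an explicit automorphism $\phi\in\Aut(H')$ swapping the two elements $a,b$ that happen to lie in the image of $H$; then $\prec$ and $\prec^{\phi}$ restrict to distinct admissible orders on the rigid $H$, which is already the contradiction. Your route instead descends to a non-rigid subalgebra $K\cong\mathbf 2\times\mathbf 2$ of $H$ and amalgamates two copies of $H$ over $K$ with a twist by $\sigma$, so that the two resulting embeddings of the rigid $H$ into $\comext D$ force incompatible orderings on $K$. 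In fact the paper's $H'$ is an amalgam of exactly your diagram $D$ (the two obvious p-morphic surjections from two $2$-chains onto $\mathbb P$ give the two embeddings $H\hookrightarrow H'$ agreeing on $K$ up to $\sigma$), so the two arguments are closer than they first appear. The paper's version is the more elementary one, since it needs neither the amalgamation property nor any abstract existence statement; your version has the virtue of isolating a reusable template (rigid object containing a non-rigid subobject, plus AP), and note that ordinary amalgamation already suffices---you never use the ``super'' part, and $e\neq f$ follows simply from the injectivity of $\il^D$ rather than from strong amalgamation.
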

\begin{proof}
  Suppose that such a class $\mathcal K^*$ exists.
  Let $H$ be an arbitrary finite Heyting algebra,
  and consider the action of $\Aut(H)$ on the set of binary relations on $H$.
  Since $\mathcal K^*$ is closed under isomorphism types,
  the set of admissible orderings $A_L$ on $H$ is a union of orbits.
  Since $\mathcal K^*$ is order-forgetful, $A_L$ consists of a single orbit.

  Now consider the poset $\mathbb P'$ that is
  the disjoint union of two 2-chains,
  with its quotient $\mathbb P$ obtained by collapsing one of the 2-chains
  into a point.
  The canonical surjection $\mathbb P' \twoheadrightarrow \mathbb P$
  is p-morphic,
  which induces a Heyting algebra embedding $H \hookrightarrow H'$.
  Let $a, b \in H'$ correspond to the two 2-chains.
  Clearly, $H$ is rigid whereas there is an automorphism $\phi : H' \to H'$
  under which $a$ and $b$ are conjugates.
  Consider an admissible ordering $\prec$ on $H'$;
  without loss of generality, we may assume $a \prec b$.
  Writing the action of $\Aut(H')$ by superscripts, we have $b \prec^\phi a$.
  Since $\mathcal K^*$ is a \FR\ class,
  the restrictions of $\prec$ and $\prec^\phi$ to $H$, respectively,
  are admissible orderings on $H$.
  Now, we have $\mathord{\prec} \cap H^2 \neq \mathord{\prec^\phi} \cap H^2$,
  as witnessed by $(a,b)\in H^2$.
  These cannot belong to the same orbit of $A_H$ as $H$ is rigid.
\end{proof}

From this point on, we study $\Aut(L)$ as an abstract group,
and show that it is simple.
Our argument is based on the technique by Tent and
Ziegler~\cite{tent11:_urysoh}.
Our ternary relations is reminiscient of
the stationary independence relation on the random poset defined in Calderoni, Kwiatkowska, and Tent~\cite{CALDERONI202143},
but note that our setting is different as our language is algebraic.
In what follows, recall that structures
$M_1$ and $M_2$ sharing the same domain $M$ but of possibly different languages
are \emph{definitionally equivalent}
if subsets of $M^n$ are $0$-definable in $M_1$ if and only if it is
$0$-definable in $M_2$ for every $n < \omega$.

\begin{lem}\label{lem:stationary}
  Let $M$ be a countable ultrahomogeneous structure.
  Suppose that $M$ definitionally equivalent expansion of a bounded semilattice.
  If $\mathrm{Age}(M)$ has the superamalgamation property,
  then the ternary relation $\indep'$ among finite sets of $M$
  defined by
  \[
    A\indep[B]'C \iff \langle AB \rangle \indep[\langle B\rangle] \langle BC \rangle,
  \]
  where $\indep$ is as in formula~(\ref{eq:anchor}),
  and $\langle S \rangle$ denotes substructure generated by $S$,
  is a stationary independence relation
  in the sense of Tent and Ziegler~\cite{tent11:_urysoh}.
\end{lem}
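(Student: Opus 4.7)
The plan is to verify the six Tent--Ziegler axioms for $\indep'$: invariance, symmetry, monotonicity, transitivity, existence, and stationarity. Invariance is immediate since $\indep'$ is defined purely from the semilattice reduct (substructure generation and the induced order are both $\Aut(M)$-invariant). Symmetry is inherited from the symmetric defining condition of the inner $\indep$ in formula~(\ref{eq:anchor}). Monotonicity ($A \indep[B]' CD \Rightarrow A \indep[B]' C$) holds because $\langle BC \rangle \subseteq \langle BCD \rangle$: any witness $c \in \langle B \rangle$ produced for the defining condition over the larger substructure is still a valid witness over the smaller one.

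For transitivity, suppose $A \indep[B]' C$ and $A \indep[BC]' D$. To show $\langle AB \rangle \indep[\langle B \rangle] \langle BCD \rangle$, let $a \in \langle AB \rangle$ and $e \in \langle BCD \rangle$ with $a \le e$. The second hypothesis, unpacked as $\langle ABC \rangle \indep[\langle BC \rangle] \langle BCD \rangle$, furnishes $c' \in \langle BC \rangle$ with $a \le c' \le e$; the first hypothesis $\langle AB \rangle \indep[\langle B \rangle] \langle BC \rangle$ then furnishes $c \in \langle B \rangle$ with $a \le c \le c'$, giving $a \le c \le e$. The case $e \le a$ is handled by the dual clause.

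For existence, given finite $A, B, C \subseteq M$, form the diagram $D := \langle AB \rangle \hookleftarrow \langle B \rangle \hookrightarrow \langle BC \rangle$ in $\mathrm{Age}(M)$. The superamalgamation property delivers $\comext D \in \mathrm{Age}(M)$ with embeddings $\il^D, \ir^D$ satisfying $\il^D(\langle AB \rangle) \indep[\langle B \rangle] \ir^D(\langle BC \rangle)$. Since $\comext D \in \mathrm{Age}(M)$, we may realize it inside $M$; using ultrahomogeneity to arrange that $\ir^D$ is the identity on $\langle BC \rangle \subseteq M$, we obtain $A' := \il^D(A)$ with $A' \equiv_B A$ and $A' \indep[B]' C$.

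The main obstacle is stationarity: assuming $A \equiv_B A'$ with $A \indep[B]' C$ and $A' \indep[B]' C$, one must deduce $A \equiv_{BC} A'$. By ultrahomogeneity of $M$ and the semilattice hypothesis (under which qftp's over finite sets coincide with isomorphism types of generated substructures), it suffices to extend a fixed iso $\theta : \langle AB \rangle \to \langle A'B \rangle$ over $\langle B \rangle$ with $\theta(A) = A'$ to an iso $\hat\theta : \langle ABC \rangle \to \langle A'BC \rangle$ over $\langle BC \rangle$. The key claim is that both substructures are minimal superamalgamations of $\langle AB \rangle$ (resp.\ $\langle A'B \rangle$) and $\langle BC \rangle$ over $\langle B \rangle$ witnessing $\indep$, and that such minimal superamalgamations are unique up to canonical iso over the joint image. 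One extends $\theta \cup \mathrm{id}_{\langle BC \rangle}$ to the generators and uses the independence relation to reduce every comparison $a \le c$ crossing the two sides to one mediated by $\langle B \rangle$ (where $\theta$ is the identity), so that all such comparisons are preserved; the bounded-semilattice framework then lifts preservation of the induced order to preservation of the meet and bounds, forcing $\hat\theta$ to be a well-defined isomorphism. This last reduction is the technically most delicate step and is where the semilattice hypothesis is essential; in the Heyting-algebra application targeted by the paper, the same conclusion is witnessed concretely by Maksimova's fiber-product description of the superamalgamation of dual posets, which makes the minimal superamalgamation manifestly unique.
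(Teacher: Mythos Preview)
Your approach is more hands-on than the paper's, which simply cites \cite[Examples~2.2.1]{tent11:_urysoh} for Existence, Invariance, and Stationarity, declares Monotonicity and Symmetry obvious ``by the shape of the definition,'' and then spends its effort on Transitivity. For Transitivity the paper does \emph{not} do your direct order-chasing: instead it argues that, by ultrahomogeneity and definitional equivalence with the partial-order reduct, any witness to superamalgamation of $\langle AB\rangle \hookleftarrow \langle B\rangle \hookrightarrow \langle BC\rangle$ lies in a single $\Aut(M)_{(B)}$-orbit depending only on the orbit of $AC$, so that $A\indep[B]'C$ becomes an orbit condition from which Transitivity is read off. Your two-step interpolation (find $c'\in\langle BC\rangle$, then $c\in\langle B\rangle$) is cleaner and more elementary for that axiom.

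There is, however, an imprecision in your Stationarity sketch. The claim that ``the bounded-semilattice framework lifts preservation of the induced order to preservation of the meet'' is not what does the work: in a bare semilattice, fixing the order on a generating set $\langle AB\rangle\cup\langle BC\rangle$ does \emph{not} determine where new meets land in $\langle ABC\rangle$. What actually forces uniqueness is the hypothesis of definitional equivalence with the poset reduct together with ultrahomogeneity: these give quantifier elimination in the order language, so the full type of (an enumeration of) $\langle AB\rangle\cup\langle BC\rangle$---and hence the isomorphism type of the substructure it generates---is determined by its induced partial order, which the anchor condition pins down. That is precisely the content of the paper's orbit characterization, and it is the point you should invoke rather than semilattice algebra. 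Your fallback to Maksimova's explicit fibre-product in the Heyting case is correct but unnecessary once this is said.
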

\begin{proof}
  By \cite[Examples~2.2.1]{tent11:_urysoh},
  the ternary relation~$\indep'$ satisfies the axioms Existence, Invariance, and Stationarity.
  By the shape of the definition of $\indep'$,
  we have Monotonicity and Symmetry.
  It remains to show the axiom Transitivity.
  Since $M$ is ultrahomogeneous and
  definitionally equivalent to its partial order reduct,
  whenever $A, B, C \subseteq M$ are finite,
  any $S$ witnessing the superamalgamation property for
  the diagram $\langle AB \rangle \hookleftarrow \langle B \rangle \hookrightarrow \langle BC \rangle$
  belong to the same $\Aut(M)_{(B)}$-orbit.
  This orbit only depends on the orbit of $A$ and that of $B$.
  Moreover, $A \indep[B]' C$ if and only if the $\Aut(M)_{(B)}$-orbit $O$ of $AC$
  is such that the orbit of $S$ above is that of $\langle ABC \rangle$,
  which only depends on $O$.
  That $A \indep[BC]' D$ and $A \indep[B]' C$ imply $A \indep[B]' D$ follows easily from this characterization.
\end{proof}

\begin{thm}\label{thm:simple}
  Let $M$ be as in the Lemma.
  If $\mathrm{Age}(M)$ has the superamalgamation property,
  and $\le$ is dense,
  then the abstract group $\Aut(M)$ is simple.
  In fact, for any nontrivial $g \in \Aut(M)$,
  every element of $\Aut(M)$ is the product of at most $16$ conjugates
  of $g$ and $g^\inv$.
\end{thm}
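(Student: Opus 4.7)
The plan is to invoke the Tent--Ziegler simplicity criterion~\cite{tent11:_urysoh}: if an ultrahomogeneous structure carries a stationary independence relation, and every nontrivial automorphism has a conjugate that sends some finite tuple to one independent from it over $\0$, then every element of the automorphism group is a product of at most $16$ conjugates of any fixed nontrivial $g^{\pm 1}$. Since Lemma~\ref{lem:stationary} already furnishes the stationary independence relation $\indep'$, the proof reduces to verifying the moving-maximally hypothesis for an arbitrary nontrivial $g \in \Aut(M)$.

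Fix such a $g$, and pick $a \in M$ with $g(a) \neq a$. The first step is to arrange, by conjugating $g$, for some tuple to be sent to an incomparable position: if $g(a)$ is already incomparable with $a$, take $\bar a = (a)$ and proceed. Otherwise, without loss of generality $g(a) < a$; using the density of $\le$ together with the superamalgamation property of $\mathrm{Age}(M)$, we construct a substructure of $M$ in which a new element is positioned strictly between $g(a)$ and $a$ while being incomparable with suitable auxiliary parameters, and invoke ultrahomogeneity to obtain an $h \in \Aut(M)$ such that $hgh^\inv$ exhibits an incomparable pair. Once an incomparable pair is secured, one chooses $\bar a$ rich enough that any comparability between an element of $\langle \bar a \rangle$ and one of $\langle g(\bar a) \rangle$ is witnessed by a member of $\langle \0 \rangle$; density of $\le$ again enables the configuration to be realized in $M$ via superamalgamation.

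The main obstacle is verifying the moving-maximally condition in full, especially for ``monotone-like'' automorphisms for which no single element is sent to something incomparable with itself; here the density of $\le$ is essential, as it allows a fresh element to be inserted between $g(a)$ and $a$ and thereby the construction of the required conjugating $h$ via superamalgamation. A subsidiary difficulty is that $M$ is a semilattice expansion, so $\langle \bar a \rangle$ contains derived terms such as $a_i \wedge a_j$ in addition to the coordinates of $\bar a$; ensuring that no unintended comparability between such terms and members of $\langle g(\bar a) \rangle$ obstructs independence takes careful bookkeeping, but density of $\le$ suffices to accommodate each such potential comparability by interposing a fresh intermediate. Once the moving-maximally condition is in hand, the $16$-conjugate bound is inherited verbatim from the Tent--Ziegler argument, yielding the simplicity of $\Aut(M)$.
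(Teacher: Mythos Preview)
Your invocation of the Tent--Ziegler machinery is the right idea, but the criterion you state is not the one that yields the $16$-conjugate bound, and the specific construction you sketch for ``monotone-like'' $g$ does not work. The relevant condition is that $g$ move \emph{almost maximally}; via \cite[Lemma~5.1]{tent11:_urysoh} this follows from the assertion that $g$ fixes pointwise no complete type over a finite set with infinitely many realizers. Neither formulation reduces to producing a \emph{single} tuple $\bar a$ with $g(\bar a)\indep'_{\0}\bar a$, and in fact no such tuple need exist. Suppose $g$ satisfies $g(a)<a$ for every $a\in\supp g$ (nothing in the hypotheses rules this out). Since any $h\in\Aut(M)$ preserves $\le$, every conjugate $hgh^{-1}$ is again strictly decreasing on its support, so your plan of ``conjugating into an incomparable configuration'' cannot succeed. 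Worse, for any $a\in\supp g$ the relation $g(a)<a$ is a comparability with no interpolant in $\langle\0\rangle=\{0,1\}$, so $\bar a\not\indep'_{\0} g(\bar a)$ for \emph{every} tuple $\bar a$ meeting $\supp g$. Thus the step you flag as ``the main obstacle'' is not merely delicate; it is false as stated.

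The paper sidesteps this by never attempting to produce incomparable pairs or independence over $\0$ directly. Density is used only to show that $\supp g$ is infinite: if $g(a)\neq a$ then the open interval $(a\wedge g(a),\,a)$ is infinite and contained in $\supp g$ (a fixed point $b$ there would satisfy $b=g(b)<g(a)$ and $b<a$, hence $b\le a\wedge g(a)$, a contradiction). From $|\supp g|=\aleph_0$ together with the stationary independence relation one argues as in \cite[Corollary~2.11]{MACPHERSON201140} that no non-algebraic type over a finite set is fixed pointwise by $g$. One then checks the additional base-shrinking property of $\indep'$ (if $A\indep'_X B$, $X'\subseteq X$, and $(A\cup B)\cap X\subseteq X'$, then $A\indep'_{X'}B$), after which \cite[Lemma~5.1]{tent11:_urysoh} delivers the $16$-conjugate conclusion. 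You should replace the attempt at a direct moving-maximally verification with this route through infinite support and the no-fixed-type condition.
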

\begin{proof}
  Let $g \in \Aut(M)$ be nontrivial.
  Then $\supp g$ is infinite by density.
  Indeed, take $a \in M$ such that $g(a) \neq a$;
  the interval $(a \wedge g(a), a)$, which is infinite by density,
  is included in $\supp g$.
  One can then see that there is no type over a finite set
  whose set of realizers is infinite and fixed pointwise by $g$.
  This follows from $|\supp g| = \aleph_0$ by arguing in the same manner
  as in \cite[Corollary 2.11]{MACPHERSON201140}.
  Finally, since if $A\indep[X]'B$, $X' \subseteq X$, and $(A\cup B)\cap X \subseteq X'$, then $A\indep[X']'B$,
  by \cite[Lemma 5.1]{tent11:_urysoh}, the claim follows.
\end{proof}

\begin{cor}
  $\Aut(L)$ is simple.
  In fact, for any nontrivial $g \in \Aut(L)$,
  every element of $\Aut(L)$ is the product of at most $16$ conjugates
  of $g$ and $g^\inv$.
\end{cor}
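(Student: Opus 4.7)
The plan is to verify that $L$ satisfies the hypotheses of Theorem~\ref{thm:simple} and then quote that theorem, which yields both simplicity and the bound $16$ at once. Reading the hypotheses off from Lemma~\ref{lem:stationary} and Theorem~\ref{thm:simple}, what is needed is that $L$ is a countable ultrahomogeneous structure, $L$ is definitionally equivalent to some expansion of a bounded semilattice, $\mathrm{Age}(L)$ has the superamalgamation property, and the order $\le$ on $L$ is dense.

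The first three conditions I would dispatch quickly. Countability and ultrahomogeneity hold by the very construction of $L$ as the \FR\ limit of finite Heyting algebras. The meet-semilattice reduct $(L, 0, 1, \wedge)$ is a bounded semilattice, so the definitional-equivalence condition is tautological. And the superamalgamation property of $\mathrm{Age}(L)$ was established in the Preliminaries via Maksimova~\cite{Maksimova1977} together with the quantifier-free finite model property for Heyting algebras.

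The real work lies in verifying density of $\le$. Given $a < b$ in $L$, I would pick a finite subalgebra $H \subseteq L$ containing both; if an intermediate element already appears in $H$, there is nothing to prove, so assume that $a$ is covered by $b$ in $H$. Working on the dual finite poset $\mathbb P$ of $H$ and viewing $a, b$ as up-sets, the cover condition forces $b \setminus a$ to have a unique $\mathbb P$-minimal element $w$ (otherwise, for any two distinct minimal elements $w_1, w_2$ of $b \setminus a$, the up-set $a \cup \{x \in \mathbb P : x \ge w_1\}$ would sit strictly between $a$ and $b$). I would then split $w$ into a $2$-chain $w_1 < w_2$ to produce a new poset $\mathbb P'$ together with the obvious p-morphism $\pi : \mathbb P' \twoheadrightarrow \mathbb P$, inducing a Heyting algebra embedding $H \hookrightarrow H'$ where $H'$ is the dual of $\mathbb P'$. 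The up-set $c := a \cup \{x \in \mathbb P' : x \ge w_2\}$ then lies strictly between the images of $a$ and $b$ in $H'$, and by ultrahomogeneity of $L$ the embedding $H \hookrightarrow H'$ can be realized inside $L$, providing an element strictly between $a$ and $b$.

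The main obstacle, such as it is, is this density check, but it is essentially the same splitting construction already performed in Lemma~\ref{lem:automorphism-group}(2) when showing that $B$ is atomless and in Lemma~\ref{lem:joinofregular}, so no new ideas will be required.
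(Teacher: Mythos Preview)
Your proposal is correct and follows exactly the intended route: the corollary in the paper is stated without proof, the implicit argument being precisely to check that $L$ meets the hypotheses of Theorem~\ref{thm:simple}. Your verification of density via the $2$-chain splitting of the unique element of $b\setminus a$ (in fact $b$ covering $a$ in the up-set lattice forces $|b\setminus a|=1$, slightly stronger than what you state) is a detail the paper leaves to the reader, and your construction is indeed the same one used in Lemma~\ref{lem:automorphism-group}(2); one minor quibble is that realizing $H\hookrightarrow H'$ inside $L$ over $H$ uses the extension property of the \FR\ limit rather than ultrahomogeneity proper, but this is cosmetic.
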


By the result by Maksimova,
our argument shows the simplicity of the automorphism group
of the \FR{} limit of all finite members of each of the 7 nontrivial subvarieties of
Heyting algebras with the (super-)amalgamation property.

\subsection*{Acknowledgment}
The author benefited from discussions with (in no particular order)
Dana Barto\v{s}ov\'a, Petr Cintula, Reid Dale,
Alexander Kruckman, Igor Sedl\'ar, Pierre Simon,
Johann Wannenburg.
The comments by the anonymous reviewer were immensely helpful.
Last, but not least, the author express his utmost gratitute to
Dugald Macpherson, who pointed out the error in the original proof of Theorem~\ref{thm:simple}.
The author was financially supported by Takenaka Scholarship Foundation,
University of California,
and the Czech Academy of Sciences during the preparation of the present article.

\bibliographystyle{asl}
\bibliography{thebib}

\end{document}